\newtheorem{theorem}{Theorem}[section]
\newtheorem{proposition}[theorem]{Proposition}
\newtheorem{corollary}[theorem]{Corollary}
\newtheorem{remark}[theorem]{Remark}
\newtheorem{lemma}[theorem]{Lemma}
\numberwithin{equation}{section}
\newcommand{\R}{\mathbb{R}}
\newcommand{\Q}{\mathbb{Q}}
\newcommand{\Z}{\mathbb{Z}}
\renewcommand{\epsilon}{\varepsilon}
\renewcommand{\phi}{\varphi}
\begin{document}

\title{On inequalities for convex functions.}


\author[L. Bernal]{Luis Bernal-Gonz\'alez}
\author[P. Jim\'enez]{Pablo Jim\'enez-Rodr\'iguez}
\author[G. Mu\~noz]{Gustavo A. Mu\~noz-Fern\'andez}
\author[J. Seoane]{Juan B. Seoane-Sep\'ulveda.}

\begin{abstract}
We study some properties convex functions fulfill. Among the conclusions we obtain from such result, we are able to prove some nontrivial inequalities among real numbers, and we give an improvement of the reverse triangle inequality in the particular case where three aligned points are considered in a normed space. 
\end{abstract}

\maketitle

\section{Introduction}

Convexity is one of the first concepts that are introduced in an undergraduate course of mathematics, and its definition is quite natural to state. Nevertheless, convex functions keep playing a central role in operator theory, in real analysis and in some realms of applied mathematics, like management science or optimization theory (we refer the interested reader to \cite{Ng,Ni} for applications of the property of convexity). \\
We remind that a function $f:V \rightarrow \R$ (where $V$ is a vector space over $\R$) is {\bf convex} if, whenever $x, \, y \in V$ and $0 \leq \lambda \leq 1$, then
$$
f(\lambda x+(1-\lambda)y) \leq \lambda f(x)+(1-\lambda)f(y).
$$
Similarly, we say that $f$ is {\bf midconvex} if, whenever $x, \, y \in V$,
$$
f\left({x+y \over 2}\right) \leq {1 \over 2}f(x)+{1 \over 2}f(y).
$$
Jensen started the systematic study of convex functions, where his interest was focused on the property of midconvexity (\cite{Jen1, Jen2}). It is trivial that a convex function is midconvex as well, and one natural question is whether midconvexity is enough to guarantee convexity. In this direction, the answer is well-known:
\begin{theorem}[\cite{RoVar}, p. 217] \label{counterexample} There exist additive functions (that is, functions that satisfy $f(x+y)=f(x)+f(y)$ for every $x,y \in V$) that are discontinuous on $\R$. Remark that an additive function satisfies $f(px+qy)=pf(x)+qf(y)$ for every $x,y \in V$ and $p,q \in \Q$.
\end{theorem}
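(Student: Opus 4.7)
The plan is to establish both the existence of a discontinuous additive function and the $\Q$-linearity remark, in that order of priority.

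I would first verify the remark, since it provides the scaffolding for the construction. Suppose $f$ is additive. Then $f(0)=f(0+0)=2f(0)$ forces $f(0)=0$, a straightforward induction yields $f(nx)=nf(x)$ for $n\in\N$, and $f(x)+f(-x)=f(0)=0$ extends this to $n\in\Z$. Substituting $x/n$ into $f(ny)=nf(y)$ gives $f(x/n)=f(x)/n$, and combining these facts produces $f(qx)=qf(x)$ for every $q\in\Q$ and then $f(px+qy)=pf(x)+qf(y)$ for $p,q\in\Q$.

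Next, to build a discontinuous example, I would regard $\R$ as a vector space over $\Q$ and invoke the axiom of choice (Zorn's lemma) to obtain a Hamel basis $B\subset\R$. Every real number $x$ then has a unique finitely supported expansion $x=\sum_{b\in B}q_b(x)\,b$ with $q_b(x)\in\Q$. I would fix two distinct basis elements $b_0,b_1\in B$, set $f(b_0)=1$ and $f(b)=0$ for every $b\in B\setminus\{b_0\}$, and extend by $\Q$-linearity via $f(x)=\sum_{b\in B}q_b(x)f(b)$. This extension is additive by construction.

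Finally, for discontinuity, I would argue by contradiction. Any continuous additive $g:\R\to\R$ satisfies $g(q)=qg(1)$ for every $q\in\Q$ by the first step, so density of $\Q$ in $\R$ and continuity of $g$ force $g(x)=g(1)x$ for all $x\in\R$. If the $f$ above were continuous, then $f(x)=cx$ for $c=f(1)$; evaluating at $b_0$ and $b_1$ would give $cb_0=1$ and $cb_1=0$, forcing $b_1=0$, which contradicts the independence of the basis. Hence $f$ is discontinuous.

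The main obstacle — and the only nontrivial ingredient — is the invocation of the axiom of choice to produce the Hamel basis: the existence of discontinuous additive functions is actually independent of ZF, so no constructive argument is available, and one must be comfortable appealing to Zorn's lemma to obtain $B$.
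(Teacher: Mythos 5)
Your proposal is correct and follows essentially the same route as the paper: the paper's own (sketched) argument also takes a Hamel basis of $\R$ over $\Q$, prescribes a non-linear assignment on the basis elements, and extends by $\Q$-linearity to get an additive, necessarily discontinuous function. Your write-up simply makes explicit the details the paper leaves implicit (the specific choice $f(b_0)=1$, $f(b)=0$ otherwise, the $\Q$-homogeneity of additive functions, and the contradiction showing discontinuity).
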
 
The existence of such function is easy to find: consider a Hamel basis of $\R$ over $\Q$, $\{r_i\}_{i \in I}$, and define an arbitrary function $f:\{r_i\}_{i \in I} \rightarrow \R$ with the only condition that it is not linear. We can then obtain the desirable function by extending $f$ by linearity. \\
In fact, one can find a vector space of dimension $2^{\mathfrak{c}}$ every nonzero element of which is an everywhere discontinuous additive function (see, for example, \cite{ArBerPeSe}). \\
The search of large algebraic structures whose nonzero elements fulfill some kind of surprising property has been a very fruitful field in the past years, even though it is not the main interest of the present manuscript. We refer the interested reader to the monographs \cite{BerPeSe,ArBerPeSe} and the references therein. \\
One of the interests in the theory of convex functions is to give non-trivial characterizations of convexity. For example, even though we have already seen that a midconvex function is not convex, a midconvex function which is bounded above in a neighborhood of a single point $x_0 \in V$ is continuous, and hence convex (\cite{RoVar}, p. 215). Having that a convex function is continuous, we could have the following well-known proposition:
\begin{proposition} \label{conv_midconv}
A function $f:V \rightarrow \R$ is convex if and only if it is midconvex and it is bounded above in a neighborhood of a single point $x_0 \in V$.
\end{proposition}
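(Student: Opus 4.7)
The plan is to prove the two directions separately, with the backward direction being where the real content lies.

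For the forward direction, taking $\lambda = 1/2$ in the convexity inequality immediately yields midconvexity. For the local boundedness claim, I would appeal to the statement recalled just before the proposition, namely that a convex real-valued function on $V$ is continuous; continuous functions are, in particular, bounded above on some sufficiently small neighborhood of any given point, so the choice of $x_0$ is arbitrary.

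For the converse, the strategy factors as follows: midconvexity together with local boundedness above gives continuity (by the result from \cite{RoVar} quoted in the text), so the task reduces to showing that continuity plus midconvexity implies full convexity. To carry this out, fix $x,y\in V$ and consider
$$
g(\lambda) := f(\lambda x + (1-\lambda)y) - \lambda f(x) - (1-\lambda)f(y), \qquad \lambda \in [0,1].
$$
Since $f$ is continuous and $\lambda \mapsto \lambda x + (1-\lambda)y$ is continuous, $g$ is continuous on $[0,1]$. The goal is to show $g \leq 0$ on $[0,1]$. I would first establish $g(\lambda) \leq 0$ at every dyadic rational $\lambda = k/2^n \in [0,1]$ by induction on $n$: the case $n=1$ is exactly midconvexity, and for the inductive step one writes a fraction $(2j+1)/2^{n+1}$ with odd numerator as the midpoint of $j/2^n$ and $(j+1)/2^n$, applies midconvexity once to bound the value at this midpoint by an average of two evaluations, and then invokes the inductive hypothesis on each summand; the coefficients add up cleanly to $(2j+1)/2^{n+1}$ in front of $f(x)$. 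Finally, since the dyadic rationals are dense in $[0,1]$ and $g$ is continuous, the inequality $g(\lambda)\leq 0$ extends to every $\lambda \in [0,1]$, which is precisely convexity.

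The main obstacle is essentially clerical: arranging the dyadic induction so that the two halves recombine correctly. The analytic heart of the proposition, namely the continuity of a midconvex function bounded above on a neighborhood, has already been outsourced to the cited result of Roberts and Varberg, and no additional machinery is needed beyond this continuity fact and the density of dyadic rationals in $[0,1]$.
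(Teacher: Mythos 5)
Your proposal is correct and follows essentially the same route as the paper, which simply cites Roberts--Varberg (midconvex and bounded above near a point implies continuous, hence convex) together with the continuity of convex functions for the forward direction; your dyadic-rational induction merely fills in the standard ``continuous $+$ midconvex $\Rightarrow$ convex'' step that the paper leaves implicit. Note only that both you and the paper lean on the assertion that a convex function on $V$ is continuous, which is the same (tacit) assumption the authors make, so no genuinely different argument is involved.
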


The study of characterizations of the property of convexity has been fruitful in the past years, and numerous papers have been published. For example, Nikodem proved that the boundedness condition in Proposition \ref{conv_midconv} could be substituted by quasiconvexity (that is, $f(\lambda x+(1-\lambda y)) \leq \max\{f(x),f(y)\}$ for every $x, \, y \in V$ and $0 \leq \lambda \leq 1$). Teo and Yang could extend the previous characterization to {\it any} value $\lambda \in (0,1)$:
\begin{theorem}[\cite{TeoYang}]
$f:V \rightarrow \R$ is convex if and only if $f$ is quasiconvex and there exists $\lambda_0 \in (0,1)$ so that
$$
f(\lambda_0x+(1-\lambda_0)y) \leq \lambda_0f(x)+(1-\lambda_0)f(y).
$$
\end{theorem}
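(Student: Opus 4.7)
My plan is as follows. The forward direction is immediate: if $f$ is convex then it is quasiconvex and the $\lambda_0$-inequality holds for every $\lambda_0 \in (0,1)$. For the reverse direction, I reduce to a one-variable problem and play the two hypotheses off each other: iterated $\lambda_0$-convexity forces the defining inequality to hold on a dense set of parameters, and quasiconvexity then closes this set up to all of $[0,1]$.

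Fix $x, y \in V$ and define $\phi \colon [0,1] \to \R$ by $\phi(t) = f(tx + (1-t)y)$; this $\phi$ inherits quasiconvexity from $f$ and also satisfies $\phi(\lambda_0 s + (1-\lambda_0)t) \leq \lambda_0 \phi(s) + (1-\lambda_0)\phi(t)$ for all $s, t \in [0,1]$. Convexity of $f$ at this fixed $x, y$ is equivalent to $\phi(\mu) \leq L(\mu)$ for every $\mu \in [0,1]$, where $L(t) = (1-t)\phi(0) + t\phi(1)$ is the affine interpolant. Setting
\[
D = \{\mu \in [0,1] : \phi(\mu) \leq L(\mu)\},
\]
the goal becomes $D = [0,1]$.

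\emph{Density of $D$.} Swapping $s$ and $t$ in the hypothesis shows that $\lambda_0$-convexity is equivalent to $(1-\lambda_0)$-convexity, so without loss of generality $\lambda_0 \in (0, 1/2]$. I verify that $D$ is stable under the two affine contractions $T_1(\mu) = (1-\lambda_0)\mu$ and $T_2(\mu) = \lambda_0 + (1-\lambda_0)\mu$: applying $\lambda_0$-convexity with $(s,t)=(0,\mu)$ and $(s,t)=(1,\mu)$ respectively and inserting the bound $\phi(\mu)\leq L(\mu)$, a short algebraic simplification identifies the right-hand side with $L(T_i(\mu))$. Since $\{0, 1\}\subseteq D$ and $T_1([0,1])\cup T_2([0,1]) = [0,1-\lambda_0]\cup[\lambda_0,1] = [0,1]$, a standard iterated-function-system argument gives density: for any $\mu^* \in [0,1]$ one greedily chooses $i_n\in\{1,2\}$ with $\mu^*$ in the range of $T_{i_1}\circ\cdots\circ T_{i_n}$, and since this composite contracts $[0,1]$ to diameter $(1-\lambda_0)^n$, we get $T_{i_1}\circ\cdots\circ T_{i_n}(0) \to \mu^*$.

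\emph{Closing up via quasiconvexity.} After possibly swapping $x$ and $y$, assume $\phi(0)\leq \phi(1)$, so $L$ is non-decreasing. For $\mu\in(0,1)$, pick $d\in D$ with $\mu<d$; quasiconvexity applied to the ordered triple $0<\mu<d$ gives $\phi(\mu)\leq\max\{\phi(0),\phi(d)\}\leq\max\{L(0),L(d)\}=L(d)$. Letting $d\to\mu^+$ through $D$ and using continuity of the affine function $L$ yields $\phi(\mu)\leq L(\mu)$, so $\mu\in D$. Hence $D=[0,1]$, which is the desired convexity of $f$ along the segment $[y,x]$.

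The main obstacle I anticipate is the density step: the key realization is that, when combined with the $\lambda_0\leftrightarrow 1-\lambda_0$ symmetry, the single $\lambda_0$-convexity hypothesis generates, via the contractive dynamics of $T_1, T_2$, a \emph{dense} family of admissible parameters, rather than just a sparse sequence like $\{\lambda_0^n\}$. Once density is in hand, the quasiconvexity step reduces to a short continuity argument on the affine function $L$.
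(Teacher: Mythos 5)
Your argument is correct, and the key computations check out: expanding $L$, the stability of $D$ under $T_1$ and $T_2$ reduces to the identities $\lambda_0+(1-\lambda_0)(1-\mu)=1-(1-\lambda_0)\mu$ and $(1-\lambda_0)(1-\mu)=1-\lambda_0-(1-\lambda_0)\mu$, the swap $s\leftrightarrow t$ does give the $\lambda_0\leftrightarrow 1-\lambda_0$ symmetry that lets you assume $\lambda_0\le 1/2$ so that $T_1([0,1])\cup T_2([0,1])=[0,1]$, and the greedy iterated-function-system step together with $0,1\in D$ yields density of $D$; the final quasiconvexity step, using that $L$ is affine (hence continuous) and non-decreasing after the harmless swap of $x$ and $y$, then closes $D$ up to all of $[0,1]$. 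One thing to be aware of: the paper itself offers no proof of this statement --- it is quoted from Teo and Yang with a citation --- so there is no in-paper argument to compare against. Your route, restricting to a segment, showing the set of admissible parameters is dense via the contractive dynamics generated by the single $\lambda_0$-inequality, and then upgrading to every parameter using quasiconvexity, is essentially the standard mechanism behind results of this kind (it is the same spirit as Nikodem's midconvex-plus-quasiconvex theorem and the original Teo--Yang argument), and your write-up is self-contained and uses no topology on $V$, which is exactly what the statement requires.
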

Recently, Fujii, Kim and Nakamoto characterized the property of convexity in terms of the average rate of change:
\begin{theorem}[\cite{FuKimNa}]
A real valued function $f$ on an interval $J=[a,b)$ with $b \in (-\infty,+\infty]$ is convex (resp. concave) if and only if, for each $0<\epsilon<b-a$, $f(t+\epsilon)-f(t)$ is non-decreasing (resp. non-increasing) on $[a,b-\epsilon)$.
\end{theorem}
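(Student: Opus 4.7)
My plan is to work with the slope-monotonicity reformulation of convexity: $f:[a,b)\to\R$ is convex if and only if
\[
\frac{f(z)-f(x)}{z-x} \leq \frac{f(y)-f(z)}{y-z}
\]
for every triple $x<z<y$ in $[a,b)$. I will prove each direction of the theorem against this equivalent form, writing $g_\epsilon(t):=f(t+\epsilon)-f(t)$; the concave case then follows at once by applying the convex case to $-f$.

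For the forward implication (convex $\Rightarrow g_\epsilon$ non-decreasing), I fix $\epsilon\in(0,b-a)$ and $t_1<t_2$ in $[a,b-\epsilon)$. Setting $s:=t_1$, $w:=t_2+\epsilon$, and $\mu:=\epsilon/(w-s)\in(0,1)$, a direct algebraic check yields
\[
t_1+\epsilon=(1-\mu)s+\mu w \quad\text{and}\quad t_2=\mu s+(1-\mu)w.
\]
Applying the definition of convexity at each of these two convex combinations and adding the resulting inequalities produces $f(t_1+\epsilon)+f(t_2)\leq f(t_1)+f(t_2+\epsilon)$, i.e.\ $g_\epsilon(t_1)\leq g_\epsilon(t_2)$.

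For the converse, I first handle \emph{commensurable} triples $x<z<y$, meaning those with $z-x=p\delta$ and $y-z=q\delta$ for some $\delta>0$ and positive integers $p,q$. Partitioning $[x,y]$ into $n=p+q$ equal subintervals of length $\delta$ at the nodes $x_k:=x+k\delta$ and writing $d_k:=f(x_k)-f(x_{k-1})$, the hypothesis applied with $\epsilon=\delta$ forces the chain $d_1\leq d_2\leq\cdots\leq d_n$. Since the arithmetic mean of the first $p$ terms of a non-decreasing sequence cannot exceed the arithmetic mean of the last $q$, this rearranges directly into the slope inequality for the triple $(x,z,y)$.

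The main obstacle will be removing the rationality assumption on the ratio $(z-x)/(y-z)$, so as to obtain the slope inequality for arbitrary triples. I plan to approximate a general triple by commensurable ones and pass to the limit, which demands some regularity of $f$. This regularity I intend to extract from the hypothesis itself: the monotonicity of every $g_\epsilon$ on $[a,b-\epsilon)$ forces $f$ to admit finite one-sided limits at each interior point of $[a,b)$, and, coupled with the rational slope inequalities already in hand, these one-sided limits squeeze the approximating slopes together tightly enough to deliver the slope inequality for all triples, and hence full convexity on $[a,b)$.
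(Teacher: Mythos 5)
Your forward implication is correct, and it is in fact exactly inequality \eqref{ineq_main} of Theorem \ref{main} in disguise (take $c=t_1$, $a=t_1+\epsilon$, $b=t_2$, so $a+b-c=t_2+\epsilon$); the commensurable-triple step of your converse is also sound. Note, however, that the paper itself quotes this theorem from \cite{FuKimNa} without proof, so the only comparison available is with the surrounding material of Sections 1 and 2 --- and that material is precisely what undermines your final step.

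The gap is your claim that the monotonicity of every $g_\epsilon$ forces $f$ to have finite one-sided limits at interior points: it does not, and no regularity whatsoever can be extracted from the hypothesis. Let $A$ be a discontinuous additive function as in Theorem \ref{counterexample}, restricted to $[a,b)$. Then $g_\epsilon(t)=A(t+\epsilon)-A(t)=A(\epsilon)$ is constant in $t$, hence (weakly) non-decreasing for every $\epsilon$, while $A$ is unbounded on every subinterval, its graph is dense in the plane, it has one-sided limits nowhere, and it is not convex. So your limiting argument cannot be completed from the stated hypotheses; indeed the ``if'' direction of the statement as quoted is false without an additional assumption (the original result of Fujii--Kim--Nakamoto is for \emph{continuous} $f$), which is the same phenomenon the paper records in Remark \ref{remark}(3) for inequality \eqref{ineq_main}. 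Once continuity --- or merely boundedness above on one subinterval --- is added, your converse closes much more simply than by approximation: taking $t_1=x$, $t_2=(x+y)/2$ and $\epsilon=(y-x)/2$, the hypothesis gives $2f\left({x+y \over 2}\right)\leq f(x)+f(y)$, i.e.\ midconvexity, and Proposition \ref{conv_midconv} then yields convexity; alternatively, your commensurable-triple slope inequality extends to arbitrary triples by continuity. Without such an added hypothesis, the theorem you are trying to prove is simply not true, so the defect is in the plan, not merely in a missing detail.
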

Using that characterization, the authors are able to prove some non-trivial theorems concerning operator monotone functions. For some more recent characterizations, the interested reader can also consult the results in \cite{E,HanMart,LiYeh,Mi,VoHiJe}. \\
In this article we will give a property for convex functions that, similarly to midconvexity, gives almost a characterization (resulting an equivalent property in the situation of boundedness over one single interval). Using that property, in section \ref{ineq} we will prove some other inequalities that convex functions fulfill and we will also give several applications in the form of nontrivial inequalities. Finally, in section \ref{multivariable} we will study what can be said for the multivariable case.  \\
For the results that we will present in the following section, we will make use of the following lemma, which is a well-known result:

\begin{lemma} \label{lem1}
If $\phi$ and $\psi$ are two convex functions and $\phi$ is increasing, then $\phi \circ \psi$ is convex as well. \\
Similarly, if $\phi$ and $\psi$ are two concave functions and $\phi$ is increasing, then $\phi \circ \psi$ is concave as well.
\end{lemma}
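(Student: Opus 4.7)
The plan is to apply the defining inequalities directly, using the monotonicity of $\phi$ as the bridge that lets a convexity inequality for $\psi$ survive being fed into $\phi$. Fix $x,y$ in the domain of $\psi$ and $\lambda\in[0,1]$. First, I would invoke convexity of $\psi$ to write
$$\psi(\lambda x+(1-\lambda)y)\le \lambda\psi(x)+(1-\lambda)\psi(y).$$
Because $\phi$ is increasing, applying $\phi$ to both sides preserves the inequality:
$$\phi\bigl(\psi(\lambda x+(1-\lambda)y)\bigr)\le \phi\bigl(\lambda\psi(x)+(1-\lambda)\psi(y)\bigr).$$
Finally, I would apply convexity of $\phi$ to the right-hand side to obtain
$$\phi\bigl(\lambda\psi(x)+(1-\lambda)\psi(y)\bigr)\le \lambda\phi(\psi(x))+(1-\lambda)\phi(\psi(y)),$$
and chaining the two displays gives the convexity of $\phi\circ\psi$.

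For the concave case the template is identical with the inequalities reversed. Concavity of $\psi$ flips the first inequality; because $\phi$ is (still) increasing, its application preserves the reversed direction; concavity of $\phi$ then flips the third display in the same direction, and composing yields concavity of $\phi\circ\psi$.

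There is essentially no obstacle in the argument; the only point to keep in mind is that monotonicity of $\phi$ is exactly what is required to carry a convexity-type inequality through $\phi$ without altering its sense, and that ``increasing'' should be read in the non-strict sense so that the middle step works verbatim (the strict case is of course a fortiori). One should also remark that no assumption of continuity or differentiability is needed anywhere, so the lemma is valid in the generality stated.
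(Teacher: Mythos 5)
Your proof is correct: it is the standard composition argument (convexity of $\psi$, then monotonicity of $\phi$ to push that inequality through, then convexity of $\phi$), and the concave case is handled by the same template with reversed inequalities. The paper states this lemma without proof as a well-known fact, and your argument is precisely the canonical justification it implicitly relies on, valid in the full generality stated.
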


Remark that, in general, if $\phi$ is not increasing and $\phi$ is convex or concave, we do not obtain that $\phi \circ \psi$ is convex or concave. \\
To show this statement, it suffices to take $\phi(x)=x^2$ (if we want a non increasing convex function) or $\phi(x)=x-x^2$ (if we want a non increasing concave function), and $\psi(x)=x^2+x$ (for a convex function) or $\psi(x)=1-x^2$ (for a concave function). \\
The following proposition has a straightforward proof, which we include for the sake of completeness:

\begin{proposition} \label{eq_conv}
Let $\phi:I \rightarrow \R$ be a convex function and assume that $a<b$ are points in $I$ and that $0<\lambda_0<1$ is such that $\phi(\lambda_0 a+(1-\lambda_0)b)=\lambda_0\phi(a)+(1-\lambda_0)\phi(b)$. Then, $\phi(\lambda a+(1-\lambda)b)=\lambda\phi(a)+(1-\lambda)\phi(b)$ for every $0 \leq \lambda \leq 1$.
\end{proposition}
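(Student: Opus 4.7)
The plan is to show that the graph of $\phi$ on $[a,b]$ coincides with the chord joining $(a,\phi(a))$ and $(b,\phi(b))$. Write $c = \lambda_0 a + (1-\lambda_0)b$, so the hypothesis says that the point $(c,\phi(c))$ lies on that chord. For an arbitrary $\lambda\in(0,1)$, set $d=\lambda a+(1-\lambda)b$; convexity of $\phi$ already gives the inequality $\phi(d)\le \lambda\phi(a)+(1-\lambda)\phi(b)$, so the entire task reduces to showing the reverse inequality.

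I would split into two cases depending on whether $d$ lies between $a$ and $c$ or between $c$ and $b$, and in each case express $c$ as a convex combination involving $d$. For instance, if $\lambda<\lambda_0$, then $c$ lies between $d$ and $b$, and a short computation gives $c = \beta a+(1-\beta)d$ fails, so instead I write $c = \beta a + (1-\beta)d$ with the correct coefficients; more cleanly, if $\lambda_0<\lambda$ one writes
$$
c \;=\; \tfrac{\lambda_0}{\lambda}\,d \;+\; \Bigl(1-\tfrac{\lambda_0}{\lambda}\Bigr)\,b ,
$$
and if $\lambda<\lambda_0$ one writes
$$
c \;=\; \tfrac{\lambda_0-\lambda}{1-\lambda}\,a \;+\; \tfrac{1-\lambda_0}{1-\lambda}\,d .
$$
In each case the coefficients are in $(0,1)$, so convexity of $\phi$ applies to the right-hand side, yielding an upper bound for $\phi(c)$ in terms of $\phi(d)$ and $\phi(a)$ or $\phi(b)$.

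Now I plug in the hypothesis $\phi(c)=\lambda_0\phi(a)+(1-\lambda_0)\phi(b)$ on the left-hand side of this convexity inequality and solve for $\phi(d)$. The algebra cleans up nicely: the coefficient $\lambda_0/\lambda$ (respectively $(1-\lambda_0)/(1-\lambda)$) cancels against the $\lambda_0$ (respectively $1-\lambda_0$) appearing from the hypothesis, and what remains is precisely
$$
\phi(d) \;\ge\; \lambda\,\phi(a)+(1-\lambda)\,\phi(b).
$$
Combined with the opposite inequality from convexity, this yields equality, which is what we wanted.

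The only real subtlety is choosing the right convex-combination representation of $c$ in terms of $d$ and one of the endpoints — once that bookkeeping is done, the rest is a mechanical application of the definition of convexity. There is no topological or continuity input needed: the argument is purely algebraic, valid on any real vector space.
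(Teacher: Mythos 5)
Your proposal is correct and uses exactly the same key identities as the paper, namely writing $\lambda_0 a+(1-\lambda_0)b$ as a convex combination of $\lambda a+(1-\lambda)b$ with $b$ (when $\lambda_0<\lambda$) or with $a$ (when $\lambda<\lambda_0$); the only difference is that you derive the reverse inequality directly while the paper phrases it as a proof by contradiction, and your slight verbal slip about where $c$ lies relative to $d$ and $b$ does not affect the (correct) displayed formulas.
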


\begin{proof}
Assume we can find $0<\lambda<1$ so that $\phi(\lambda a+(1-\lambda)b)<\lambda\phi(a)+(1-\lambda)\phi(b)$. If $\lambda>\lambda_0$, write $\lambda_0a+(1-\lambda_0)b={\lambda_0 \over \lambda}(\lambda a+(1-\lambda)b)+\left(1-{\lambda_0 \over \lambda}\right)b$, so that
$$
\begin{aligned}
\lambda_0\phi(a)+(1-\lambda_0)\phi(b)&=\phi(\lambda_0 a+(1-\lambda_0)b)=\phi\left({\lambda_0 \over \lambda}(\lambda a+(1-\lambda)b)+\left(1-{\lambda_0 \over \lambda}\right)b\right) \\
&\leq {\lambda_0 \over \lambda} \phi(\lambda a+(1-\lambda)b)+\left(1-{\lambda_0 \over \lambda}\right)\phi(b) \\
&<{\lambda_0 \over \lambda}(\lambda \phi(a)+(1-\lambda)\phi(b))+\left(1-{\lambda_0 \over \lambda}\right)\phi(b)=\lambda_0\phi(a)+(1-\lambda_0)\phi(b),
\end{aligned}
$$
reaching a contradiction. The case $\lambda<\lambda_0$ is treated analogously, working on the identity
$$
\lambda_0a+(1-\lambda_0)b=\left(1-{1-\lambda_0 \over 1-\lambda}\right)a+{1-\lambda_0 \over 1-\lambda}(\lambda a+(1-\lambda)b).
$$ 
\end{proof}

\section{The main theorem} \label{main_sec}

In this section we will prove the main inequality of this manuscript.
 
\begin{theorem} \label{main}
Let $I\subseteq \R$ be an interval (possibly infinite) and $\phi:I \rightarrow \mathbb{R}$ be a convex function. \\
Then, for every $c,\,a,\,b \in I$ so that $c \leq a,b$ (resp. $c \geq a,b$) and $a+b-c \in I$, we have that 
\begin{equation} \label{ineq_main}
\phi(a)+\phi(b)\leq \phi(c)+\phi(a+b-c).
\end{equation}
If, furthermore, $\phi$ is bounded above in one single interval, then inequality \eqref{ineq_main} whenever $c \leq a,b$ or $c \geq a,b$ is equivalent to convexity.
\end{theorem}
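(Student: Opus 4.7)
The plan is to handle the two assertions separately. For the forward inequality \eqref{ineq_main}, I would first reduce to the case $c \leq a,b$; the opposite case is symmetric after swapping the roles of $c$ and $d := a+b-c$. Setting $d = a+b-c$ and assuming without loss of generality that $c \leq a \leq b$, a direct computation yields $d-a = b-c \geq 0$ and $d-b = a-c \geq 0$, so that $c \leq a \leq b \leq d$ and both $a$ and $b$ lie in the segment $[c,d]$.

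The key observation is that $a$ and $b$ are \emph{complementary} convex combinations of the endpoints $c$ and $d$: setting $\lambda = (d-a)/(d-c) \in [0,1]$, one checks $a = \lambda c + (1-\lambda)d$ and
$$(1-\lambda)c + \lambda d = c + \lambda(d-c) = c + (d-a) = b.$$
Applying convexity of $\phi$ to each of these two identities and adding, the weights in $\lambda$ and $1-\lambda$ combine to $1$ on each of $\phi(c)$ and $\phi(d)$, yielding $\phi(a) + \phi(b) \leq \phi(c) + \phi(d)$, which is \eqref{ineq_main}.

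For the converse, suppose $\phi$ satisfies \eqref{ineq_main} for every admissible triple and is bounded above on some interval. By Proposition \ref{conv_midconv} it suffices to establish midconvexity. Given $x \leq y$ in $I$, apply \eqref{ineq_main} with $c = x$ and $a = b = (x+y)/2$: then $c \leq a, b$ and $a+b-c = y \in I$, so $2\phi((x+y)/2) \leq \phi(x) + \phi(y)$, which is precisely midconvexity. The whole argument hinges on the algebraic observation that $a$ and $b$ are symmetric convex combinations of $c$ and $a+b-c$; beyond this, both directions reduce to a line or two. The only minor point to check is that Proposition \ref{conv_midconv}, although stated for functions on a vector space, applies verbatim to $\phi: I \to \R$, since its proof only uses convex combinations of points inside $I$.
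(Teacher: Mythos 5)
Your proof is correct, and the forward direction takes a genuinely different (and simpler) route than the paper. The paper first normalizes to $c=0$, chains two convexity estimates with the specific weights $\tfrac{a}{a+b}$, $\tfrac{b}{a+b}$ (the inequalities (*) and (**) in its proof), and then recovers the general case by a translation $\phi(\cdot+c)$ and a reflection $\phi(-\cdot)$; you instead observe directly that with $d=a+b-c$ and $c\leq a,b$ one has $a,b\in[c,d]$ and that $a$ and $b$ are \emph{complementary} convex combinations of $c$ and $d$, so adding the two convexity inequalities kills the weights and gives \eqref{ineq_main} in one line, with the case $c\geq a,b$ handled by exchanging the roles of $c$ and $d$. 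This is noticeably shorter and, interestingly, it shows that the claim in Remark \ref{remark}(1) --- that Theorem \ref{main} ``by no means'' can be proved by a trivial argument using convexity --- is too strong: your argument is precisely such a trivial argument, of the same kind the authors use there for the case $a\leq c\leq b$. Two small points you should make explicit: the degenerate case $d=c$ (equivalently $a=b=c$), where your $\lambda=(d-a)/(d-c)$ is undefined but the inequality is an identity; and, as you already note, that Proposition \ref{conv_midconv} applies to functions on an interval, which is how the paper itself uses it. Your converse direction (take $c=x$, $a=b=\tfrac{x+y}{2}$ to get midconvexity, then invoke Proposition \ref{conv_midconv} under the boundedness hypothesis) is exactly the paper's argument.
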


\begin{proof}
Notice that, without loss of generality, we may take $b \geq a$. Assume first $c=0 \leq a,b$. Then, notice that
$$
\phi(a)=\phi\left({a \over a+b}(a+b)+{b \over a+b}\,0\right) \leq {a \over a+b}\phi(a+b)+{b \over a+b}\phi(0) \quad (*).
$$
Therefore, we can write
$$
\begin{aligned}
\phi(b) &\leq \phi\left({a \over b}a+{b-a \over b}(a+b) \right) \leq {a \over b}[\phi(a)-\phi(a+b)]+\phi(a+b) \quad (**) \\ &\stackrel{(*)}{\leq} {a \over b}\left[{a \over a+b}\phi(a+b)+{b \over a+b}\phi(0)-\phi(a+b)\right]+\phi(a+b) \\
&={a \over b}\left[{b \over a+b}\phi(0)-{b \over a+b}\phi(a+b) \right]+\phi(a+b) \\
&={a \over a+b}\phi(0)-{a \over a+b}\phi(a+b)+\phi(a+b)={a \over a+b}\phi(0)+{b \over a+b}\phi(a+b).
\end{aligned}
$$
Therefore,
$$
\phi(a)+\phi(b) \leq {a \over a+b}\phi(a+b)+{b \over a+b}\phi(0)+{a \over a+b}\phi(0)+{b \over a+b}\phi(a+b)=\phi(0)+\phi(a+b).
$$
Assuming now that $c$ is any number less than $a$ and $b$, just consider the convex function $\phi_c=\phi(\cdot+c)$ and notice that $a-c, b-c \geq 0$. Then,
$$
\phi_c(a-c)+\phi_c(b-c) \leq \phi_c(0)+\phi_c(a+b-2c),
$$
and the result follows. \\
Assume finally that $c \geq a,b$ and consider the function $f(x)=\phi(-x)$, which is convex over $-I=\{-t \, : \, t \in I\}$. Then, we know that
$$
f(-a)+f(-b) \leq f(-c)+f(-a-b+c),
$$
and the inequality follows. \\
Let us assume now that $\phi$ satisfies $\phi(a)+\phi(b) \leq \phi(c)+\phi(a+b-c)$ whenever $c \leq a,b$ or $c \geq a,b$. Then, $\phi\left({x+y \over 2}\right)+\phi\left({x+y \over 2}\right) \leq \phi(x)+\phi\left({x+y \over 2}+{x+y \over 2}-x\right)=\phi(x)+\phi(y)$, so that $\phi$ is a midconvex function and therefore $\phi$ is convex (by Proposition (1.2)).
\end{proof}

\begin{remark} \label{remark} \textcolor{white}{hola} \\
\begin{enumerate}
\item Notice that if $I \subseteq \R$ is an interval, $a,b,c \in I$ satisfy $a \leq c \leq b$, we take $\lambda \in [0,1]$ so that $c=\lambda a+(1-\lambda)b$ and $\phi:I \rightarrow \R$ is convex, then
$$
\begin{aligned}
\phi(c)+\phi(a+b-c)&=\phi(\lambda a+(1-\lambda)b)+\phi(a+b-(\lambda a+(1-\lambda)b)) \\
&\leq \lambda\phi(a)+(1-\lambda)\phi(b)+(1-\lambda)\phi(a)+\lambda\phi(b) \\
&=\phi(a)+\phi(b).
\end{aligned}
$$ 
We would like to stress that this property could be also proved via Theorem \ref{main}, writing $\phi(c)+\phi(a+b-c) \leq \phi(b)+\phi(c+a+b-c-b)=\phi(b)+\phi(a)$. Nevertheless, by no means Theorem \ref{main} can be proved with a trivial argument using the convexity of the function.

\item If we wonder when we may guarantee strict inequality, $\phi(a)+\phi(b)<\phi(c)+\phi(a+b-c)$ when the three points are not equal to each other, we may take a look to Proposition \ref{eq_conv} and the inequalities (*) and (**) in the proof of Theorem \ref{main}. Therefore, if $\phi$ does not have any interval where its graph is linear and $a,b,c$ are three points not equal to each other (with $c \geq a,b$ or $c \leq a,b$), we would have that $\phi(a)+\phi(b) \lneq \phi(c)+\phi(a+b-c)$.

\item The condition of $\phi$ being bounded on a single interval can not be removed, since the counterexample presented in Proposition \ref{counterexample} serves also as a function that fulfills the inequality presented in Theorem \ref{main}, despite not being convex (not even continuous).

\end{enumerate}
\end{remark}

\begin{corollary} \label{cor1}
Let $I\subseteq \R$ be an interval and $\psi:I \rightarrow \mathbb{R}$ be a concave function. \\
Then, for every $c,\,a,\,b \in I$ so that $c \leq a,b$ (resp. $c \geq a,b$) and $a+b \in I$, we have that $\psi(a)+\psi(b)\geq \psi(c)+\phi(a+b-c)$. \\
If $\psi:\R \rightarrow \R$ is a locally bounded function that satisfies $\psi(a)+\psi(b)\geq \psi(c)+\phi(a+b-c)$ whenever $c \leq a,b$ or $c \geq a,b$, then $\psi$ is concave. 
\end{corollary}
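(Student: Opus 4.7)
The plan is to deduce this corollary directly from Theorem \ref{main} via the involution $\phi := -\psi$. This substitution sends convex functions to concave functions (and vice versa), reverses every inequality, preserves local boundedness, and interchanges ``bounded above'' with ``bounded below.'' The symmetric hypotheses $c\leq a,b$ and $c\geq a,b$ are unaffected, and the quantity $a+b-c$ is unchanged under the substitution. So the statement for $\psi$ concave is essentially the negation of the statement for $\phi=-\psi$ convex.

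For the forward direction, I would assume $\psi$ is concave on $I$, so that $\phi=-\psi$ is convex on $I$. Theorem \ref{main} then yields
\[
\phi(a)+\phi(b)\leq\phi(c)+\phi(a+b-c)
\]
for every $a,b,c\in I$ with $c\leq a,b$ (resp.\ $c\geq a,b$) and $a+b-c\in I$. Multiplying by $-1$ and rewriting $\phi$ as $-\psi$ immediately gives the desired concave inequality $\psi(a)+\psi(b)\geq\psi(c)+\psi(a+b-c)$.

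For the reverse direction, I would assume $\psi:\R\to\R$ is locally bounded and satisfies the reversed inequality whenever $c\leq a,b$ or $c\geq a,b$. Local boundedness of $\psi$ implies that $\phi=-\psi$ is bounded above on every bounded subinterval of $\R$, which is certainly enough to invoke the second half of Theorem \ref{main}. Negating the hypothesis gives $\phi(a)+\phi(b)\leq\phi(c)+\phi(a+b-c)$ under the stated conditions, so Theorem \ref{main} concludes that $\phi$ is convex, i.e.,\ $\psi$ is concave.

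The only points worth flagging are minor typographical slips in the printed statement (``$\phi(a+b-c)$'' on the right-hand side should read ``$\psi(a+b-c)$,'' and the admissibility condition should be ``$a+b-c\in I$'' rather than ``$a+b\in I$''); these should be silently corrected in the write-up. Otherwise no genuine obstacle arises — the corollary is a clean duality consequence of Theorem \ref{main}, and I do not anticipate any step requiring more than the direct translation under $\psi\mapsto-\psi$.
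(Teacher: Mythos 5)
Your proposal is correct and matches the paper's proof, which likewise consists of applying Theorem \ref{main} to the convex function $-\psi$ (the paper states this in one line). Your additional remarks --- that local boundedness of $\psi$ gives boundedness above of $-\psi$ on an interval, and that the statement's ``$\phi(a+b-c)$'' and ``$a+b\in I$'' are typos for ``$\psi(a+b-c)$'' and ``$a+b-c\in I$'' --- are accurate and only make the argument more explicit.
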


\begin{proof}
Apply Theorem \ref{main} to the convex function $-\psi$.
\end{proof}

\section{Applications} \label{ineq}

A natural question that appears when regarding the proofs in Section \ref{main} is whether midconvexity is equivalent to the property proved in Theorem \ref{main}. Before giving an answer, we will need a preliminary inequality:

\begin{corollary} \label{cor2}
Let $a,b,c$ be numbers so that $c \leq a,b$ (or $c \geq a,b$). Then, $(a+b)c \leq c^2+ab$. If $a \leq c \leq b$, then $(a+b)c \geq c^2+ab$. If those three points are not equal to each other, the inequalities are strict.
\end{corollary}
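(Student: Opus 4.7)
The plan is to apply Theorem \ref{main} to the specific convex function $\phi(x)=x^{2}$ on $I=\mathbb{R}$, and then use the complementary inequality from Remark \ref{remark}(1) for the middle case. First, I would note that $\phi(x)=x^{2}$ is convex on $\mathbb{R}$, and that no subinterval of $\mathbb{R}$ is contained in the graph of a line for this function; in particular $\phi$ has no linear piece, so Remark \ref{remark}(2) will eventually give strict inequality.

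Next, assuming $c\leq a,b$ (or $c\geq a,b$), Theorem \ref{main} applied to $\phi(x)=x^{2}$ gives
\[
a^{2}+b^{2}\leq c^{2}+(a+b-c)^{2}.
\]
Expanding $(a+b-c)^{2}=(a+b)^{2}-2(a+b)c+c^{2}=a^{2}+2ab+b^{2}-2(a+b)c+c^{2}$ and cancelling $a^{2}+b^{2}$, one obtains $0\leq 2c^{2}+2ab-2(a+b)c$, i.e.\ $(a+b)c\leq c^{2}+ab$, which is the first claimed inequality.

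For the reverse inequality when $a\leq c\leq b$, I would invoke Remark \ref{remark}(1) (applied to $\phi(x)=x^{2}$), which yields
\[
c^{2}+(a+b-c)^{2}\leq a^{2}+b^{2};
\]
the same expansion as above, with the inequality reversed, gives $(a+b)c\geq c^{2}+ab$.

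Finally, for strictness: since $\phi(x)=x^{2}$ contains no linear piece on its graph, Remark \ref{remark}(2) applies, so whenever $a,b,c$ are pairwise distinct (with $c$ outside $[\min(a,b),\max(a,b)]$, or strictly inside it) the corresponding inequality is strict. There is no real obstacle here; the proof is essentially a one-line substitution followed by elementary algebra, and the only thing to be slightly careful about is tracking which inequality (Theorem \ref{main} vs.\ Remark \ref{remark}(1)) corresponds to each of the two orderings of $c$ relative to $a$ and $b$.
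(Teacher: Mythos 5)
Your proposal is correct and follows exactly the paper's route: apply Theorem \ref{main} and Remark \ref{remark} to the convex function $\phi(x)=x^{2}$, with the expansion of $(a+b-c)^{2}$ supplying the elementary algebra that the paper leaves implicit. Nothing further is needed.
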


\begin{proof}
Consider the function $\phi(x)=x^2$ in Theorem \ref{main} and Remark \ref{remark}. 
\end{proof}

\begin{corollary}
There exist functions which are midconvex but do not satisfy the inequality in Theorem \ref{main}.
\end{corollary}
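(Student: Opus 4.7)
The plan is to build a midconvex $\phi$ that fails the inequality \eqref{ineq_main} by composing a discontinuous additive function with the squaring map. Let $g:\R\to\R$ be an additive but discontinuous function (provided by Theorem \ref{counterexample}) and set $\phi(x):=g(x)^{2}$. Additivity forces $g$ to be $\Q$-linear, so $g\bigl(\tfrac{x+y}{2}\bigr)=\tfrac12(g(x)+g(y))$; combined with the convexity of $t\mapsto t^{2}$, this yields
$$\phi\!\left(\frac{x+y}{2}\right)=\left(\frac{g(x)+g(y)}{2}\right)^{\!2}\leq \frac{g(x)^{2}+g(y)^{2}}{2}=\frac{\phi(x)+\phi(y)}{2},$$
so $\phi$ is midconvex on $\R$.

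Next, one checks that \eqref{ineq_main} can fail for $\phi$. Using $g(a+b-c)=g(a)+g(b)-g(c)$ and expanding, a direct calculation gives
$$\phi(c)+\phi(a+b-c)-\phi(a)-\phi(b)=2\bigl(g(c)-g(a)\bigr)\bigl(g(c)-g(b)\bigr),$$
so \eqref{ineq_main} fails precisely when $g(c)$ lies strictly between $g(a)$ and $g(b)$. Because $g$ is non-monotone, such a configuration can be produced with $c\leq a,b$. Concretely, fix a Hamel basis of $\R$ over $\Q$ containing both $1$ and $\sqrt{2}$ and prescribe $g(1)=0$, $g(\sqrt{2})=1$ (any extension to the remaining basis elements is allowed). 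Then $g(p+q\sqrt{2})=q$ for all $p,q\in\Q$, so taking $a=10$, $b=10+10\sqrt{2}$, $c=-100+\sqrt{2}$ yields $c<a\leq b$ with $g(a)=0<1=g(c)<10=g(b)$, whence $\phi(a)+\phi(b)>\phi(c)+\phi(a+b-c)$, as desired.

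The main obstacle is conceptual rather than computational: one has to recognize that midconvexity is insensitive to the order of $\R$, whereas \eqref{ineq_main} depends crucially on it. Note that $\phi=g$ alone will not work, since any additive function turns \eqref{ineq_main} into an equality; squaring a non-monotone additive function is the decisive move that decouples midconvexity from the order structure, after which the argument reduces to the single identity above together with a concrete choice of basis values.
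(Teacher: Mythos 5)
Your proposal is correct and follows essentially the same route as the paper: square a discontinuous additive function built from a Hamel basis so that midconvexity survives (via $\Q$-linearity plus convexity of $t\mapsto t^2$) while the order-dependent inequality \eqref{ineq_main} fails. The only difference is cosmetic: the paper verifies the failure through Corollary \ref{cor2} with an additive map that swaps the values of two basis elements, whereas you use the tidy factorization $\phi(c)+\phi(a+b-c)-\phi(a)-\phi(b)=2\bigl(g(c)-g(a)\bigr)\bigl(g(c)-g(b)\bigr)$ together with an explicit choice of points, which is a perfectly valid (and arguably more transparent) verification of the same construction.
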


\begin{proof}
Following the ideas reflected in the example given in Theorem \ref{counterexample}, consider a Hamel basis of $\R$ over $\Q$ $\{r_i\}_{i \in I}$ and fix $i_1, \, i_2 \in I$. Without loss of generality, we may assume that $r_{i_1}<r_{i_2}$ 
Define $f:\{r_i\}_{i \in I} \rightarrow \R$ as follows:
$$
f(r_i)=\begin{cases}
r_i & \text{ if }i \neq i_1, i_2, \\
-r_i+r_{i_1}+r_{i_2} & \text{ if $i=i_1$ or $i=i_2$,}
\end{cases}
$$
and extend $f$ to the whole $\R$ via linearity. We will show that then $g(x)=f(x)^2$ is a midconvex function which does not satisfy the inequality in Theorem \ref{main}. Indeed, $f(x)$ is an additive function, so the midconvexity of $g$ is trivial. Pick up now $i \in I$ so that $r_i>r_{i_2}$. Applying Corollary \ref{cor2}, we have that
$$
\begin{aligned}
\left[f(r_{i_2})+f(r_i)\right]f(r_{i_1})&=(r_{i_1}+r_i)r_{i_2} > r_{i_2}^2+r_{i_1}r_i=f^2(r_{i_1})+f(r_{i_2})f(r_i), \quad \text{from which} \\
0 &> 2f^2(r_{i_1})+2f(r_{i_2})f(r_i)-2f(r_{i_2})f(r_{i_1})-2f(r_i)f(r_{i_1}), \quad \text{ and then} \\
g(r_{i_2})+g(r_{i})&=f^2(r_{i_2})+f^2(r_{i}) \\
& > f^2(r_{i_1})+2f(r_{i_2})f(r_i)-2f(r_{i_2})f(r_{i_1})-2f(r_i)f(r_{i_1})+f^2(r_{i_1})+f^2(r_{i_2})+f^2(r_i)  \\
&=f^2(r_{i_1})+(f(r_{i})+f(r_{i_2})-f(r_{i_1}))^2=f^2(r_{i_1})+f^2(r_{i}+r_{i_2}-r_{i_1}) \\
&=g(r_{i_1})+g(r_{i}+r_{i_2}-r_{i_1}), 
\end{aligned}
$$
with $r_{i_1}<r_{i_2},r_i$.
\end{proof}

On the other hand, in regard of Theorem \ref{counterexample} and the subsequent comments, we remark that there can not be a nontrivial vector space all of whose nonzero elements are midconvex functions that do not satisfy the inequality in Theorem \ref{main} (we remind the reader that was inequality \eqref{ineq_main}). Indeed, first of all notice that we can not have a vector space of strictly midconvex functions (plus the zero function), since the negative of such an element would fulfill the opposite inequality. Therefore, a nonzero element $f$ of that nontrivial vector space would have to satisfy
\begin{equation} \label{mid_eq}
f\left({x_1+x_2 \over 2} \right)={1 \over 2}[f(x_1)+f(x_2)]
\end{equation}
for every $x_1,x_2 \in \R$. Also, if $f$ is a midconvex function not satisfying inequality \eqref{ineq_main}, then $f+k$ is also a midconvex function not satisfying inequality \eqref{ineq_main}, for every constant number $k \in \R$:
$$
\begin{aligned}
f\left({x_1+x_2 \over 2}\right)+k&={1\over 2}[f(x_1)+f(x_2)]+k={1\over 2}[(f(x_1)+k)+(f(x_2)+k)] \quad \text{and} \\
(f(a)+k)+(f(b)+k)&>f(c)+f(a+b-c)+2k=(f(c)+k)+(f(a+b-c)+k),
\end{aligned}
$$
for every $x_1,x_2,a,b,c \in \R$ so that $c \leq a,b$. Therefore, if 
$$
\left<f\right> \subseteq \Big\{\phi:\R \rightarrow \R \, : \, \phi \text{ is a midconvex function not satisfying the inequality in \eqref{ineq_main}}\Big\}\cup \{0\},
$$
we would have that 
$$
\left<f-f(0) \right> \subseteq \Big\{\phi:\R \rightarrow \R \, : \, \phi \text{ is a midconvex function not satisfying the inequality in \eqref{ineq_main}}\Big\}\cup \{0\}.
$$
 
\begin{lemma}
If $f$ is a function satisfying equation \eqref{mid_eq} for every $a,b \in \R$ and $f(0)=0$, then $f$ is additive.
\end{lemma}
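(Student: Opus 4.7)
The plan is to exploit the hypothesis $f(0)=0$ together with the midpoint equation in a two-step manner: first obtain a ``halving'' identity, and then combine it with the midpoint equation evaluated at arbitrary points to recover additivity.

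First I would set $x_2 = 0$ in equation \eqref{mid_eq}. Using $f(0)=0$, this collapses to the identity
$$
f\!\left(\frac{x_1}{2}\right) = \frac{1}{2} f(x_1) \qquad \text{for every } x_1 \in \R.
$$
This is the only place the hypothesis $f(0)=0$ is needed; it serves to link the value at a midpoint with a scaling by $1/2$.

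Next, I would take arbitrary $a,b \in \R$ and apply this halving identity to $x_1 = a+b$, obtaining
$$
f\!\left(\frac{a+b}{2}\right) = \frac{1}{2} f(a+b).
$$
At the same time, the original midpoint equation applied to the pair $(a,b)$ gives
$$
f\!\left(\frac{a+b}{2}\right) = \frac{1}{2}\bigl[f(a)+f(b)\bigr].
$$
Equating the two expressions and multiplying by $2$ yields $f(a+b)=f(a)+f(b)$, which is exactly additivity.

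There is no real obstacle here: the argument is two lines once one remembers to substitute $x_2=0$ to extract the halving identity. The only subtlety worth flagging in a write-up is that this argument does not require any regularity of $f$, so the conclusion produces precisely the kind of (possibly wildly discontinuous) additive function discussed in Theorem \ref{counterexample}, consistent with the broader discussion of the paper.
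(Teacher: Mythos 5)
Your proof is correct and follows essentially the same route as the paper: both first use $f(0)=0$ in the midpoint equation to obtain the scaling identity $f(2a)=2f(a)$ (equivalently $f(x/2)=\tfrac12 f(x)$), and then combine it with the midpoint equation to deduce $f(a+b)=f(a)+f(b)$. The only cosmetic difference is that you evaluate the midpoint equation at $(a,b)$ and halve $a+b$, while the paper evaluates it at $(2a,2b)$ and doubles $a$ and $b$.
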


\begin{proof} 
First of all, we can write $f(a)=f\left({2a+0 \over 2}\right)={1 \over 2}[f(2a)+f(0)]$, so that $f(2a)=2f(a)$. Therefore,
$$
f(a+b)=f\left({2a+2b \over 2} \right)={1 \over 2}(f(2a)+f(2b))=f(a)+f(b).
$$
\end{proof}

Hence, we would have that $f$ is rationally linear and in particular it would satisfy the inequality \eqref{ineq_main}. \\

We can also use inequality \eqref{ineq_main} for proving some other nontrivial inequalities for convex and concave functions.

\begin{theorem} \label{theorem2}
Let $\phi:\R \rightarrow \R$ be a convex increasing function  and $a,b \in [c,d]$ be such that $a+b<d+c$. \\
Then $\phi(a)+\phi(b) \leq \phi(c)+\phi(d)$.
\end{theorem}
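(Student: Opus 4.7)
The plan is to combine the main inequality from Theorem \ref{main} with the monotonicity hypothesis. Without loss of generality assume $a \leq b$, so that $c \leq a \leq b \leq d$. Since $c \leq a, b$ and all the points in question lie in $I = [c,d] \subseteq \R$, Theorem \ref{main} applies provided $a+b-c \in I$. The lower bound is immediate: $a+b-c \geq c$ because both $a \geq c$ and $b \geq c$ give $a+b \geq 2c$. The upper bound is exactly the hypothesis $a+b < c+d$, which rewrites as $a+b-c < d$. So $a+b-c \in [c,d]$.

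With this in hand, Theorem \ref{main} yields
\[
\phi(a) + \phi(b) \leq \phi(c) + \phi(a+b-c).
\]
Now I invoke the increasing hypothesis on $\phi$: since $a+b-c \leq d$, we have $\phi(a+b-c) \leq \phi(d)$. Adding $\phi(c)$ to both sides and chaining with the previous inequality gives
\[
\phi(a) + \phi(b) \leq \phi(c) + \phi(a+b-c) \leq \phi(c) + \phi(d),
\]
which is the desired conclusion.

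There is really no obstacle here; the argument is a two-step chain where Theorem \ref{main} does the convexity work and monotonicity does the rest. The only subtlety worth flagging is the verification that the auxiliary point $a+b-c$ lies in the interval $I$, so that Theorem \ref{main} is legitimately applicable; this is where the hypothesis $a+b < c+d$ is used (strictness is not needed for the conclusion, only $a+b \leq c+d$ is required).
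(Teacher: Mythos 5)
Your proof is correct, but it takes a genuinely different and more direct route than the paper. The paper's argument builds an auxiliary piecewise-linear convex function $\psi$ with $\psi(0)=c$, $\psi(1)=a$, $\psi(s)=b$, $\psi(s+1)=d$ (the hypothesis $a+b<c+d$ guarantees $a-c<d-b$, hence the existence of the slope parameter $s$), invokes Lemma \ref{lem1} to conclude that $\phi\circ\psi$ is convex because $\phi$ is increasing, and then applies Theorem \ref{main} to $\phi\circ\psi$ at the points $0,1,s,s+1$. You instead apply Theorem \ref{main} directly to $\phi$ with the base point $c\leq a,b$ to get $\phi(a)+\phi(b)\leq\phi(c)+\phi(a+b-c)$, and then use the monotonicity of $\phi$ together with $a+b-c\leq d$ to replace $\phi(a+b-c)$ by $\phi(d)$; since $\phi$ is defined on all of $\R$, the membership of $a+b-c$ in the domain is automatic, and your check that $a+b-c\in[c,d]$ is really only needed for the final monotonicity step. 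Your route is shorter, avoids the composition lemma entirely, and, as you note, works under the weaker hypothesis $a+b\leq c+d$ (when $a+b=c+d$ one has $\phi(c)+\phi(a+b-c)=\phi(c)+\phi(d)$ with no appeal to monotonicity at all), whereas the paper's construction as written needs the strict inequality to choose $s$. What the paper's heavier machinery buys is a uniform template: the same polygonal-reparametrization idea is reused verbatim in Theorem \ref{theorem3} for the decreasing/concave case, so the two proofs run in parallel; but your two-step chain adapts just as easily there (apply Theorem \ref{main} at the base point $d\geq a,b$ and then use that $\phi$ is decreasing), so nothing essential is lost.
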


\begin{proof}
Let $s \in \R$ be such that
$$
{b-a \over d-b}+1<s<{b-a \over a-c}+1
$$
(whose existence can be guaranteed from the condition $a-c<d-b$). Define the function
$$
\psi(t)=\begin{cases}
c & \text{if }t \leq 0, \\
(a-c)t+c & \text{if }0<t \leq 1, \\
{b-a \over s-1}(t-1)+a & \text{if }1<t \leq s, \\
(d-b)t+b-(d-b)s & \text{if }s <t \leq s+1, \\
2(d-b)(t-s-1)+d & \text{if }t >s+1.
\end{cases}
$$
Notice that $\psi$ is a polygonal function, with increasing slopes at each step, so that $\psi$ is a convex function that satisfies $\psi(0)=c, \, \psi(1)=a, \, \psi(s)=b$ and $\psi(s+1)=d$. Applying Lemma \ref{lem1} we would obtain that $\phi \circ \psi$ is convex and, via Theorem \ref{main} we would have that 
$$
\phi(a)+\phi(b)=\phi \circ \psi(1)+\phi \circ \psi(s) \leq \phi \circ \psi(0)+\phi \circ \psi(1+s)=\phi(c)+\phi(d),
$$
as desired.
\end{proof}

\begin{theorem} \label{theorem3}
Let $\phi:\mathbb{R} \rightarrow \mathbb{R}$ be a decreasing convex function and $a,b \in [c,d]$ be such that $a+b>c+d$. \\
Then $\phi(a)+\phi(b) \leq \phi(c)+\phi(d)$.
\end{theorem}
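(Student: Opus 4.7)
The plan is to reduce Theorem \ref{theorem3} to Theorem \ref{theorem2} by the reflection $\tilde\phi(x):=\phi(-x)$. Since $\phi$ is decreasing and convex, $\tilde\phi$ is immediately seen to be \emph{increasing} (as $x$ grows, $-x$ decreases and $\phi$ is decreasing) and \emph{convex} (for $\lambda\in[0,1]$, $\tilde\phi(\lambda x+(1-\lambda)y)=\phi(\lambda(-x)+(1-\lambda)(-y))\leq\lambda\phi(-x)+(1-\lambda)\phi(-y)=\lambda\tilde\phi(x)+(1-\lambda)\tilde\phi(y)$). So $\tilde\phi:\R\to\R$ fits the hypotheses of Theorem \ref{theorem2}.

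Next, I would translate the data. The hypothesis $a,b\in[c,d]$ becomes $-a,-b\in[-d,-c]$, and $a+b>c+d$ becomes $(-a)+(-b)<(-d)+(-c)$. Hence the points $-a,-b$ lie in the interval $[-d,-c]$ and satisfy the strict-sum condition required by Theorem \ref{theorem2}, applied to $\tilde\phi$ with the new endpoints $-d$ and $-c$. That theorem then yields
$$
\tilde\phi(-a)+\tilde\phi(-b)\leq \tilde\phi(-d)+\tilde\phi(-c),
$$
which, unwinding the definition, is exactly $\phi(a)+\phi(b)\leq\phi(d)+\phi(c)$, finishing the proof.

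There is essentially no obstacle here; the only thing that needs a brief sanity check is that the direction of the sum-inequality indeed flips under $x\mapsto -x$ (which it does, and this is exactly why the \emph{decreasing} convex case of Theorem \ref{theorem3} pairs with the ``$a+b>c+d$'' hypothesis, mirroring the ``$a+b<c+d$'' hypothesis in the \emph{increasing} case of Theorem \ref{theorem2}). I would note in passing that one could attempt a direct argument mimicking the polygonal construction of Theorem \ref{theorem2}, but a decreasing convex polygonal with nodes at the values $c,a,b,d$ in the correct abscissa order turns out to be infeasible precisely because the slope condition $(d-a)(a-c)\leq(b-c)(d-b)$ reverses under the hypothesis $a+b>c+d$; the substitution $x\mapsto-x$ bypasses this issue entirely.
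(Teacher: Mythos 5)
Your proof is correct, but it follows a genuinely different route from the paper. You reduce Theorem \ref{theorem3} to Theorem \ref{theorem2} via the reflection $\tilde\phi(x)=\phi(-x)$, checking that $\tilde\phi$ is increasing and convex, that $-a,-b\in[-d,-c]$, and that $a+b>c+d$ becomes $(-a)+(-b)<(-d)+(-c)$, so the earlier theorem applies verbatim with the roles of the endpoints swapped; unwinding gives the claim. The paper instead argues directly, in parallel with its proof of Theorem \ref{theorem2}: it builds a \emph{concave} polygonal function $\psi$ (with decreasing slopes) satisfying $\psi(0)=c$, $\psi(1)=a$, $\psi(r)=b$, $\psi(r+1)=d$, notes via Lemma \ref{lem1} that $-\phi\circ\psi$ is concave (since $-\phi$ is increasing and concave), and then invokes Corollary \ref{cor1}. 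Your reduction is shorter, avoids constructing a second interpolant, and inherits whatever care was taken with the parameter choice in Theorem \ref{theorem2}; the paper's argument is self-contained modulo Corollary \ref{cor1} and makes the symmetry between the increasing and decreasing cases visible at the level of the construction. One small caveat: your closing remark that a direct polygonal argument is infeasible is not accurate as stated --- the obstruction you describe only rules out a \emph{convex increasing} interpolant mimicking Theorem \ref{theorem2} literally; the paper circumvents it by using a concave interpolant together with the concave counterpart of the main inequality, which is exactly the same slope reversal you observed, exploited rather than avoided.
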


\begin{proof}
Similarly as in Theorem \ref{theorem2}, we may choose a number $r$ so that
$$
{b-a \over a-c}+1<r<{b-a \over d-b}+1
$$
and define the function
$$
\psi(t)=\begin{cases}
2(a-c)t+c & \text{if }t \leq 0, \\
(a-c)t+c & \text{if }0<t \leq 1, \\
{b-a \over r-1}(t-1)+a & \text{if }1<t \leq r, \\
(d-b)(t-r)+b & \text{if }r<t \leq r+1, \\
d & \text{if }r+1<t.
\end{cases}
$$
Then, $\psi$ is a polygonal function, with decreasing slopes at each step, so that $\psi$ is a concave function that satisfies $\psi(0)=c, \, \psi(1)=a, \, \psi(r)=b$ and $\psi(r+1)=d$. Applying again Lemma \ref{lem1}, $(-\phi \circ \psi)$ is a concave function so, via Corollary \ref{cor1},
$$
-(\phi(a)+\phi(b))=(-\phi \circ \psi)(1)+(-\phi \circ \psi)(r)  \geq (-\phi \circ \psi)(0)+(-\phi \circ \psi)(r+1)=-(\phi(c)+\phi(d)),
$$
as desired.
\end{proof}

\begin{corollary} \label{cor3}
If $0 < c \leq a,b$ (or $0<a,b \leq c \leq a+b$), then $\log(a+b-c) \leq \log(a)+\log(b)-\log(c)$.
\end{corollary}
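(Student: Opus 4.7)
The plan is to invoke Corollary \ref{cor1} directly with the concave function $\psi = \log$ on the interval $I = (0,\infty)$. Since $\log$ is concave on $(0,\infty)$ (its second derivative is $-1/x^2 < 0$), we may apply the concave version of the main inequality, provided all four arguments $a,b,c,a+b-c$ lie in $I$.

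First I would verify the hypothesis that $a+b-c \in (0,\infty)$ in both cases of the statement. In the first case $0 < c \le a, b$: since $c \le b$ we get $a+b-c \ge a > 0$, so all four points lie in $(0,\infty)$, and the hypothesis $c \le a, b$ is precisely what Corollary \ref{cor1} demands. In the second case $0 < a, b \le c \le a+b$: we have $c \ge a,b$ and $a+b-c \ge 0$; assuming $c < a+b$ (the boundary case $c=a+b$ is trivial as $\log(0^+) = -\infty$), we get $a+b-c > 0$, and again all four arguments lie in $I$.

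Then Corollary \ref{cor1} applied to $\log$ yields
\[
\log(a)+\log(b) \geq \log(c)+\log(a+b-c),
\]
and rearranging immediately gives $\log(a+b-c) \le \log(a)+\log(b)-\log(c)$, as required.

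There is essentially no obstacle here: the corollary is a one-line application of Corollary \ref{cor1} once concavity of $\log$ and the domain check $a+b-c>0$ are noted. The only mildly subtle point is confirming positivity of $a+b-c$ in the second case, which follows from the assumed bound $c \le a+b$.
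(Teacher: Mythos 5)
Your proposal is correct and matches the paper's own proof, which likewise applies Corollary \ref{cor1} to the concave function $\log$ on $(0,\infty)$ (the paper additionally remarks that the inequality also follows from Corollary \ref{cor2} via $a+b\leq c+\frac{ab}{c}$). Your explicit verification that $a+b-c>0$ in both cases is a welcome extra detail the paper leaves implicit.
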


\begin{proof}
Apply Corollary \ref{cor1} to the function $\phi(x)=\log(x)$. Notice that this corollary can also be deduced from Corollary \ref{cor2}, applying the increase of the logarithm to the inequality $a+b \leq c+{ab \over c}$, for $0<c \leq a,b$.
\end{proof}

\begin{corollary} \label{cor4}
Let $a,b$ be such that ${(k-1)\pi \over 2} \leq a,b,a+b \leq {k\pi \over 2}$ for some $k \in \Z$. \\
Then, $|\cos(a)|+|\cos(b)| \geq 1+ |\cos(a+b)|$.
\end{corollary}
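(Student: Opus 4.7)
The plan is to apply Corollary \ref{cor1} to the function $\psi(x)=|\cos x|$ on the interval $I=[(k-1)\pi/2,\,k\pi/2]$.

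First I would verify that $\psi$ is concave on $I$. Since $\cos$ has constant sign on $I$, there is $\epsilon\in\{\pm 1\}$ with $|\cos(x)|=\epsilon\cos(x)$ throughout $I$, and hence $(|\cos|)''(x)=-\epsilon\cos(x)=-|\cos(x)|\le 0$ on $I$, yielding the concavity.

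Next I would choose the point $c$ to be the endpoint of $I$ at which $|\cos|$ attains the value $1$: namely $c=(k-1)\pi/2$ when $k$ is odd, and $c=k\pi/2$ when $k$ is even. In both cases $c$ is an integer multiple of $\pi$, so $|\cos(c)|=1$ and, by the $\pi$-periodicity of $|\cos|$,
$$|\cos(a+b-c)|=|\cos(a+b)|.$$
The hypothesis $c\le a,b$ (when $k$ is odd) or $c\ge a,b$ (when $k$ is even) is immediate from $a,b\in I$. It remains to check that $a+b-c\in I$. For odd $k$ this unfolds as $(k-1)\pi\le a+b\le (2k-1)\pi/2$; the lower inequality follows from $a,b\ge (k-1)\pi/2$, and the upper one from the hypothesis $a+b\le k\pi/2$. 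The even case is handled symmetrically, using $a+b\ge (k-1)\pi/2$ and $a,b\le k\pi/2$.

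With these verifications in place, Corollary \ref{cor1} applied to the concave function $\psi=|\cos|$ on $I$ directly gives
$$|\cos(a)|+|\cos(b)|\ge |\cos(c)|+|\cos(a+b-c)|=1+|\cos(a+b)|,$$
as desired. The only delicate point is the bookkeeping involving the parity of $k$, which dictates both the choice of $c$ and which of the two orderings ($c\le a,b$ or $c\ge a,b$) must be invoked; once this is settled, the inequality is a direct translation of Corollary \ref{cor1} via the $\pi$-periodicity of $|\cos|$.
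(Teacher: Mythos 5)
Your route---apply Corollary \ref{cor1} to the concave function $|\cos|$ on $I=[(k-1)\pi/2,\,k\pi/2]$, with $c$ the endpoint that is a multiple of $\pi$, and then use $\pi$-periodicity---is clearly the intended one (the paper states Corollary \ref{cor4} without proof, but this is exactly the pattern of the surrounding corollaries). The concavity of $|\cos|$ on $I$, the choice of $c$, and the orderings $c\le a,b$ (odd $k$) and $c\ge a,b$ (even $k$) are all fine. The gap is in your verification that $a+b-c\in I$. For odd $k$ you correctly unfold the requirement as $(k-1)\pi\le a+b\le (2k-1)\pi/2$, but you justify the upper bound by the hypothesis $a+b\le k\pi/2$; this inference needs $k\pi/2\le (2k-1)\pi/2$, i.e.\ $k\ge 1$. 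Symmetrically, in the even case the needed lower bound is $a+b\ge (2k-1)\pi/2$, and deducing it from $a+b\ge (k-1)\pi/2$ needs $k\le 0$. So your argument is actually complete only for $k\in\{0,1\}$ (for odd $k\ge 3$ and even $k\le -2$ the hypothesis is vacuous, since $a,b\in I$ forces $a+b$ outside $I$, so nothing is lost there).

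The failure at the two remaining non-vacuous values, $k=-1$ and $k=2$, is not something you could have patched: there the hypothesis forces $a+b$ to equal the multiple-of-$\pi$ endpoint, $a+b-c$ lands a full half-period outside $I$, and the stated inequality is simply false. For $k=2$ the only admissible pair is $a=b=\pi/2$, giving $|\cos a|+|\cos b|=0$ while $1+|\cos(a+b)|=2$; likewise $a=b=-\pi/2$ for $k=-1$. So the corollary as printed must exclude these two degenerate cases (equivalently, restrict to the quarter-periods with the multiple-of-$\pi$ endpoint on the correct side, i.e.\ in effect $k\in\{0,1\}$); this is a defect of the statement itself, but your even/odd bookkeeping silently assumed it away, and that is precisely where the proof, as written, breaks. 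Note also that in the cases where your argument does work, $c=0$, so the periodicity step is not even needed.
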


\begin{corollary} \label{cor5}
Let $0 \leq \theta,\zeta \leq 1$ be so that $\theta+\zeta \geq 1$. Then, 
$$
\begin{aligned}
\theta\sqrt{1-\zeta^2}+\zeta\sqrt{1-\theta^2}&\leq \sqrt{(\theta+\zeta)(2-\theta-\zeta)}, \\
\sqrt{1-\theta^2}\sqrt{1-\zeta^2}+\theta+\zeta &\geq 1+\theta\zeta.
\end{aligned}
$$
\end{corollary}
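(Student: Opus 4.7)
The plan is to reduce both inequalities to Corollary~\ref{cor4} through a trigonometric substitution. Since $\theta, \zeta \in [0,1]$, set $\theta = \cos a$ and $\zeta = \cos b$ with $a, b \in [0, \pi/2]$; then $\sqrt{1-\theta^2} = \sin a$ and $\sqrt{1-\zeta^2} = \sin b$, and the hypothesis $\theta + \zeta \geq 1$ becomes $\cos a + \cos b \geq 1$.

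For the second inequality, the cosine addition formula $\cos(a+b) = \cos a \cos b - \sin a \sin b$ yields
\[
\sqrt{1-\theta^2}\sqrt{1-\zeta^2} + \theta + \zeta - \theta\zeta - 1 = (\cos a + \cos b) - 1 - \cos(a+b),
\]
so the desired inequality is equivalent to $\cos a + \cos b \geq 1 + \cos(a+b)$. Assuming $a, b, a+b \in [0, \pi/2]$, this is exactly Corollary~\ref{cor4} with $k=1$. (As a sanity check, one could also bypass the trigonometric route by noting that $1+\theta\zeta-\theta-\zeta = (1-\theta)(1-\zeta) \geq 0$ and then squaring $\sqrt{(1-\theta^2)(1-\zeta^2)} \geq (1-\theta)(1-\zeta)$ to get the elementary $\theta+\zeta \geq 0$.)

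For the first inequality, the sine addition formula gives the pleasant identity $\theta\sqrt{1-\zeta^2} + \zeta\sqrt{1-\theta^2} = \sin(a+b)$. Squaring both sides (both nonnegative) and using $\sin^2(a+b) = 1 - \cos^2(a+b)$, the inequality collapses to $(\cos a + \cos b - 1)^2 \leq \cos^2(a+b)$; given $\cos a + \cos b \geq 1$, this amounts to $\cos a + \cos b - 1 \leq |\cos(a+b)|$, which is a rearrangement of the same content as Corollary~\ref{cor4}. The main obstacle is verifying the hypothesis of Corollary~\ref{cor4}, namely that $(a,b,a+b)$ lies in a common interval $[(k-1)\pi/2, k\pi/2]$. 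The choice $k=1$ requires $a+b \leq \pi/2$, which is \emph{not} forced by $\cos a + \cos b \geq 1$ alone, so one must either split into subcases or exploit the symmetry afforded by the alternative substitution $\theta = \sin\alpha$, $\zeta = \sin\beta$, which interchanges each angle with its complement and sends $a+b$ to $\pi - (\alpha+\beta)$. A careful case analysis along these lines should close the gap and reduce the full statement to Corollary~\ref{cor4}.
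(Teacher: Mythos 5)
Your handling of the second inequality is essentially fine, and it is a genuinely different route from the paper's: with $\theta=\cos a$, $\zeta=\cos b$ the claim becomes $\cos a+\cos b\ge 1+\cos(a+b)$, and although Corollary \ref{cor4} with $k=1$ requires $a+b\le \pi/2$ (which $\cos a+\cos b\ge 1$ does not force), the missing case is harmless, since $a+b>\pi/2$ gives $\cos(a+b)\le 0\le \cos a+\cos b-1$. Better still, your parenthetical remark is already a complete elementary proof of that half: $1+\theta\zeta-\theta-\zeta=(1-\theta)(1-\zeta)\ge 0$ and $\sqrt{(1-\theta^2)(1-\zeta^2)}\ge (1-\theta)(1-\zeta)$ (square both sides) give it directly, without even using $\theta+\zeta\ge 1$. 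The paper instead applies Theorem \ref{main} to $\arcsin$ with $a=\zeta$, $b=\theta$, $c=1$ to get \eqref{eq1} and then takes cosines, which is legitimate because both angles lie in $[0,\pi]$, where cosine is decreasing.

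The genuine gap is in the first inequality, and no case analysis will close it. Your own reduction shows that what is needed is $\cos a+\cos b-1\le |\cos(a+b)|$, and this is the \emph{reverse} of Corollary \ref{cor4} (which gives $|\cos a|+|\cos b|-1\ge |\cos(a+b)|$), not a rearrangement of it; wherever Corollary \ref{cor4} applies, your required inequality could only hold with equality. In fact the first inequality of the corollary is false as stated: for $\theta=\zeta=0.7$ the left-hand side is $1.4\sqrt{0.51}\approx 0.9998$ while the right-hand side is $\sqrt{1.4\cdot 0.6}=\sqrt{0.84}\approx 0.9165$; equivalently, in your notation, $\cos a+\cos b-1=0.4$ while $|\cos(a+b)|\approx 0.02$. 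The paper's own argument breaks at the corresponding spot: \eqref{eq1} is correct, but taking sines of both sides is not justified because the right-hand angle $\pi/2+\arcsin(\theta+\zeta-1)$ lies in $[\pi/2,\pi]$, where $\sin$ is decreasing, and indeed the inequality does not survive. So your instinct that the hypothesis of Corollary \ref{cor4} was the sticking point was pointing at a real obstruction, but the honest conclusion is that the first inequality needs extra hypotheses (it holds, for instance, when $\theta=1$ or $\zeta=1$), not a finer case split.
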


\begin{proof}
Apply the Theorem \ref{main} to the function $\phi(x)=\arcsin(x)$ and choosing $a=\zeta, \, b=\theta$ and $c=1$ to obtain 
\begin{equation} \label{eq1}
 \arcsin(\theta)+\arcsin(\zeta) \leq {\pi \over 2}+\arcsin(\theta+\zeta-1).
\end{equation}
Taking sine in both sides, and taking into account that the function $\sin(x)$ is increasing over $[0,{\pi \over 2}]$, we obtain
$$
\begin{aligned}
\theta\sqrt{1-\zeta^2}+\zeta\sqrt{1-\theta^2}&=\sin(\arcsin(\theta)+\arcsin(\zeta)) \\
& \leq \sin\left({\pi \over 2}+\arcsin(\theta+\zeta-1)\right)=\sqrt{1-(\theta+\zeta-1)^2} \\
&=\sqrt{(\theta+\zeta)(2-\theta-\zeta)},
\end{aligned}
$$
obtaining the first inequality. \\
If we now take cosine in both sides of the equation \eqref{eq1}, now taking into account that $\cos(x)$ is decreasing over $[0,{\pi \over 2}]$, we obtain
$$
\begin{aligned}
\sqrt{1-\theta^2}\sqrt{1-\zeta^2}-\theta\zeta &=\cos(\arcsin(\theta)+\arcsin(\zeta))\\
&\geq \cos\left({\pi \over 2}+\arcsin(\theta+\zeta-1)\right)=1-(\theta+\zeta),
\end{aligned}
$$
obtaining the second inequality.
\end{proof}

\begin{corollary} \label{cor6}
Let $x,y$ be such that $x+y \geq 1$. Then,
$$
(x+y)^2 \geq {(1+x^2)(1+y^2) \over 1+(x+y)^2}.
$$
\end{corollary}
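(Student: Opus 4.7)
The plan is to reduce the stated inequality to the elementary estimate $(x+y)^{2}\geq|1-xy|$ by invoking the Brahmagupta--Fibonacci-type identity
\[(1+x^{2})(1+y^{2})=(x+y)^{2}+(1-xy)^{2},\]
which is verified by a direct expansion. Multiplying the claimed inequality through by $1+(x+y)^{2}$ and substituting the identity on the right-hand side yields $(x+y)^{2}+(x+y)^{4}\geq (x+y)^{2}+(1-xy)^{2}$, which simplifies to $(x+y)^{4}\geq(1-xy)^{2}$, equivalently $(x+y)^{2}\geq|1-xy|$.

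To verify this reduced estimate, I would work under the implicit assumption $x,y\geq 0$ (in the spirit of the hypotheses in Corollary \ref{cor5}; otherwise the inequality fails, as one sees by taking $x=2$, $y=-0.9$). The hypothesis $x+y\geq 1$ together with $x,y\geq 0$ then forces $(x+y)^{2}\geq 1$. If $xy\leq 1$, then $|1-xy|=1-xy\leq 1\leq (x+y)^{2}$. If $xy>1$, then $|1-xy|=xy-1$, and the AM--GM bound $(x+y)^{2}\geq 4xy$ gives $(x+y)^{2}\geq 4xy\geq xy-1$. Either way, $(x+y)^{2}\geq|1-xy|$, and unwinding the reduction proves the corollary.

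The main obstacle is recognising the identity $(1+x^{2})(1+y^{2})=(x+y)^{2}+(1-xy)^{2}$; once it is in hand, the rest is elementary algebra. A more thematic alternative, closer to the spirit of the earlier applications, would apply Corollary \ref{cor1} to $\phi(t)=\arctan(t)$ (which is concave on $[0,\infty)$) with $a=x$, $b=y$, $c=0$, yielding $\arctan(x)+\arctan(y)\geq\arctan(x+y)$; after taking sines this gives the result cleanly in the range $xy\leq 1$, but the range $xy>1$ needs a separate direct argument, so the identity-based route is the most uniform.
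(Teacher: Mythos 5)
Your proof is correct, but it takes a genuinely different route from the paper. The paper stays inside its own machinery: it applies Corollary \ref{cor1} to the concave function $\arctan$ with $a=x$, $b=y$, $c=0$ to get $\arctan(x)+\arctan(y)\geq\arctan(x+y)$, notes also the upper bound $\arctan(x)+\arctan(y)\leq\arctan(x+y)+\tfrac{\pi}{2}$, and then uses the comparison $\sin(\theta)\geq\cos(\zeta)$ valid for $\tfrac{\pi}{4}\leq\zeta\leq\theta\leq\zeta+\tfrac{\pi}{2}$ to conclude
$$
\frac{x+y}{\sqrt{1+x^2}\sqrt{1+y^2}}=\sin\bigl(\arctan(x)+\arctan(y)\bigr)\geq\cos\bigl(\arctan(x+y)\bigr)=\frac{1}{\sqrt{1+(x+y)^2}},
$$
which squares to the claim in one stroke; in particular that trick disposes of the range $xy>1$ (where the sum of arctangents exceeds $\tfrac{\pi}{2}$) without a separate case, answering the reservation you raise about the ``thematic'' alternative. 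Your argument instead bypasses the convexity framework entirely: the identity $(1+x^2)(1+y^2)=(x+y)^2+(1-xy)^2$ reduces the statement to $(x+y)^2\geq|1-xy|$, which you settle by the two elementary cases $xy\leq 1$ and $xy>1$ (AM--GM). This is more self-contained and arguably sharper as a piece of algebra, though it loses the illustrative purpose of the section, which is to showcase inequality \eqref{ineq_main}. One point in your favour: you correctly observe that the statement needs $x,y\geq 0$ (your counterexample $x=2$, $y=-0.9$ is valid), an hypothesis the corollary omits but which the paper's own proof also uses implicitly, since Corollary \ref{cor1} is invoked with $c=0\leq a,b$ and $\arctan$ is concave only on $[0,\infty)$.
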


\begin{proof}
Apply Corollary \ref{cor1} to $\phi(t)=\arctan(t), \, a=x, \, b=y$ and $c=0$ to obtain $\arctan(x)+\arctan(y) \geq \arctan(x+y)$. Notice that, since $\phi$ is an increasing function, we would also have that $\arctan(x)+\arctan(y) \leq \arctan(x+y)+{\pi \over 2}$. Using that $\sin(\theta) \geq \cos(\zeta)$ whenever ${\pi \over 4} \leq \zeta \leq \theta \leq \zeta+{\pi \over 2}$ we can then write
$$
{x+y \over \sqrt{1+x^2}\sqrt{1+y^2}}=\sin(\arctan(x)+\arctan(y)) \geq \cos(\arctan(x+y))={1 \over \sqrt{1+(x+y)^2}}.
$$
\end{proof}

\begin{corollary} \label{cor7}
Let $0<c<a<b<d$. Then, $a+b>c+d$ if and only if $ab > cd$.
\end{corollary}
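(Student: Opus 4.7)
The plan is to handle the two directions of the equivalence separately, each by applying one of Theorems \ref{theorem2} and \ref{theorem3} to a strictly convex monotone function on $(0,\infty)$ that trades between sums and products.

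For $a+b>c+d\Rightarrow ab>cd$, I would take $\phi(x)=1/x$, which is strictly convex and decreasing on $(0,\infty)$. Since $a,b\in[c,d]\subset(0,\infty)$ and $a+b>c+d$, Theorem \ref{theorem3} yields
\[
\frac{1}{a}+\frac{1}{b}\leq\frac{1}{c}+\frac{1}{d},
\]
which rearranges to $(a+b)\,cd\leq(c+d)\,ab$. Because $(a+b)/(c+d)>1$, dividing by the positive quantity $c+d$ produces $ab>cd$.

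For the converse $ab>cd\Rightarrow a+b>c+d$, I would pass to logarithmic variables $a'=\log a$, $b'=\log b$, $c'=\log c$, $d'=\log d$, which preserve the chain $c'<a'<b'<d'$; the hypothesis $ab>cd$ then reads $a'+b'>c'+d'$. Applying Theorem \ref{theorem2} to $\phi(x)=e^x$---strictly convex and increasing on $\R$---at the points $c',a',b',d'$, combined with Remark \ref{remark}(2) (whose strict-convexity upgrade applies here since $e^x$ has no linear stretch and the four points $c<a<b<d$ are distinct), should deliver $e^{a'}+e^{b'}>e^{c'}+e^{d'}$, i.e., $a+b>c+d$.

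The main obstacle lies in the second direction: Theorem \ref{theorem2}, as stated, runs from an inequality on sums to an inequality on $\phi$-values, so its use here must be in a contrapositive form combined with strictness arguments to upgrade a non-strict $\ge$-conclusion to the strict $>$ we want. Careful tracking of strictness via Remark \ref{remark}(2) is what makes this upgrade go through; without it one lands only on $a+b\ge c+d$ rather than the required strict inequality.
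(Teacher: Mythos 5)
Your first direction is sound and essentially the paper's route: you apply Theorem \ref{theorem3} to a decreasing convex function on $(0,\infty)$ (you take $1/x$ where the paper takes $-\ln x$), and your rearrangement $(a+b)cd\le (c+d)ab$ combined with $a+b>c+d$ even yields the strict inequality $ab>cd$ without having to invoke Remark \ref{remark}, which is a small simplification over the paper.

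The second direction, however, does not go through, and no strictness bookkeeping via Remark \ref{remark} can repair it. Theorem \ref{theorem2}, applied to $\phi=e^x$ at the points $c'<a'<b'<d'$, says: if $a'+b'<c'+d'$ then $e^{a'}+e^{b'}\le e^{c'}+e^{d'}$. Its contrapositive is: if $e^{a'}+e^{b'}>e^{c'}+e^{d'}$ (that is, $a+b>c+d$) then $a'+b'\ge c'+d'$ (that is, $ab\ge cd$) --- which is merely a nonstrict restatement of the first direction, not the converse you need. The statement you would actually need, ``$a'+b'>c'+d'$ with $a',b'\in[c',d']$ implies $e^{a'}+e^{b'}>e^{c'}+e^{d'}$,'' is false for an increasing convex $\phi$, and indeed the implication $ab>cd\Rightarrow a+b>c+d$ is itself false: take $(c,a,b,d)=(1,2,3,5)$, so that $0<c<a<b<d$ and $ab=6>5=cd$, yet $a+b=5<6=c+d$. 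In fairness, the paper's own one-line proof of this direction commits exactly the same reversal of hypothesis and conclusion (it likewise ``applies Theorem \ref{theorem2} to $e^x$''), so the corollary as stated is incorrect; only the implication $a+b>c+d\Rightarrow ab>cd$ survives, and for that both your argument and the paper's are valid.
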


\begin{proof}
Apply Theorem \ref{theorem3} and Remark \ref{remark} to $\phi(x)=-\ln(x)$ and use the increasing property of the logarithmic function to show that if $a+b>c+d$, then $ab > cd$. \\
Reciprocally, observe that if $ab>cd$, then $\ln(a)+\ln(b)>\ln(c)+\ln(d)$. Apply Theorem \ref{theorem2} to the function $\phi(x)=e^x$ to obtain $a+b>c+d$.
\end{proof}

\section{The multivariable case} \label{multivariable}
We study here how we can use the inequality presented in Section \ref{main_sec} to obtain results for several variables:

\begin{theorem} \label{multi}
Let $V$ be a topological vector space, $U \subseteq V$ be a convex set, $\phi:U \rightarrow \R$ be a convex function and $a,b,c \in U$ be three aligned points that satisfy that $a+b-c$ is an element of $U$ as well and $c \notin [a,b]$. Then,
$$
\phi(a)+\phi(b) \leq \phi(c)+\phi(a+b-c).
$$
\end{theorem}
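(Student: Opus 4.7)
The plan is to reduce to the one-dimensional result of Theorem \ref{main} by restricting $\phi$ to the line through $a$, $b$, $c$. First I would dispose of the trivial case $a=b$: the hypothesis $c\notin[a,b]=\{a\}$ forces $c\neq a$, and since $a=\tfrac{1}{2}c+\tfrac{1}{2}(2a-c)$, convexity of $\phi$ gives $2\phi(a)\leq \phi(c)+\phi(2a-c)$, which is exactly the desired inequality in this case.

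Assuming henceforth $a\neq b$, I would set $v=b-a$ and, using that $a,b,c$ are aligned, pick the unique $s\in\R$ with $c=a+sv$; then $a+b-c=a+(1-s)v$. Next I would introduce
\[
J=\{t\in\R : a+tv\in U\},
\]
which is a (real) interval because $U$ is convex, and which contains $0$, $1$, $s$ and $1-s$ by the hypotheses on $a,b,c$ and $a+b-c$. Defining $f:J\to\R$ by $f(t)=\phi(a+tv)$, the composition of the convex function $\phi$ with the affine map $t\mapsto a+tv$ is convex, so $f$ is convex on the real interval $J$, and $f(0)=\phi(a)$, $f(1)=\phi(b)$, $f(s)=\phi(c)$, $f(1-s)=\phi(a+b-c)$.

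Now the condition $c\notin[a,b]$ translates exactly to $s\notin[0,1]$. If $s<0$, then $s\leq 0,1$, and Theorem \ref{main} applied to $f$ (playing the role of $\phi$ there, with the point $s$ in the role of $c$ and the points $0,1$ in the roles of $a,b$) yields $f(0)+f(1)\leq f(s)+f(1-s)$. If $s>1$, the symmetric case $s\geq 0,1$ of Theorem \ref{main} gives the same conclusion. Translating back through $f$, this is $\phi(a)+\phi(b)\leq \phi(c)+\phi(a+b-c)$.

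I do not anticipate any real obstacle in this argument. The only point to watch is that the parametrization $t\mapsto a+tv$ must actually reach all four relevant points while staying in $U$; this is where the hypothesis $a+b-c\in U$ enters, and together with convexity of $U$ it guarantees $\{0,1,s,1-s\}\subseteq J$. Once this is set up, the statement follows immediately from the one-variable theorem because precomposition with an affine parametrization preserves convexity.
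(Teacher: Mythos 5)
Your proof is correct and follows essentially the same route as the paper: both reduce to Theorem \ref{main} by restricting $\phi$ to an affine parametrization of the line through $a$, $b$, $c$, using that precomposition with an affine map preserves convexity. The only cosmetic differences are that you base the parametrization at $a$ with direction $b-a$ (so $c\notin[a,b]$ becomes $s\notin[0,1]$ immediately), while the paper parametrizes the segment from $c$ to $a+b-c$ and checks via a small matrix inversion that $a$ and $b$ lie inside it, and that you treat the degenerate case $a=b$ explicitly, which the paper's argument passes over.
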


\begin{proof}
First of all, we remark that, since $a+b-c \in U$ and $c \notin [a,b]$, we obtain that $a,b \in [c,a+b-c]$, since we can write $c=(1-\lambda_0)a+\lambda_0 b$ (the points $a, b$ and $c$ are aligned) with $\lambda_0 \in \R \setminus [0,1]$ and the system
$$
\left(\begin{array}{c}c \\ a+b-c\end{array}\right)=\left[\begin{array}{cc}1-\lambda_0 & \lambda_0 \\ \lambda_0 & 1-\lambda_0\end{array}\right]\left(\begin{array}{c} a \\ b\end{array}\right)
$$
is equivalent to
$$
\left(\begin{array}{c} a \\ b\end{array}\right)=\left[\begin{array}{cc}{1-\lambda_0 \over 1-2\lambda_0} & 1-{1-\lambda_0 \over 1-2\lambda_0} \\ 1-{1-\lambda_0 \over 1-2\lambda_0} & {1-\lambda_0 \over 1-2\lambda_0}\end{array}\right]\left(\begin{array}{c}c \\ a+b-c\end{array}\right),
$$
where ${1-\lambda_0 \over 1-2\lambda_0} \in (0,1)$ if $\lambda_0 \in \R \setminus [0,1]$. Notice also that if $a=\lambda_1 c+(1-\lambda_1)(a+b-c)$ (for some $\lambda_1 \in (0,1)$), we obtain that $b=(1-\lambda_1)a+\lambda_1b$. Define 
$$
\begin{array}{rccl}
g:&[0,1]& \longrightarrow & U \\
& t & \mapsto & tc+(1-t)(a+b-c).
\end{array}
$$
Then, $\phi \circ g:[0,1] \rightarrow \R$ is a convex function. Applying Theorem \ref{main},
$$
\phi(a)+\phi(b)=(\phi \circ g)(\lambda_1)+(\phi \circ g)(1-\lambda_1) \leq (\phi \circ g)(0)+(\phi \circ g)(1)=\phi(c)+\phi(a+b-c).
$$
\end{proof}
If $(X, \|\cdot\|_X)$ is a normed space and $a,b$ and $c$ are elements of $X$, we can apply the inverse triangular inequality to write $\|a+b-c\|_X \geq \|a+b\|_X-\|c\|_X$. If those elements are aligned we can say something more:
\begin{corollary}
Let $(X,\| \cdot \|_X)$ be a normed space and $a,b, \, c$ be three aligned points, with $c \notin [a,b]$. Then
$$
\|a+b-c\|_X \geq \Big|\|a\|_X+\|b\|_X-\|c\|_X\Big|.
$$
In particular, if $\|a\|_X=\|b\|_X$ and $\lambda \in [0,1]$ we have that
$$
\|a+\lambda(a-b)\|_X \geq \|b+\lambda(a-b)\|_X.
$$
\end{corollary}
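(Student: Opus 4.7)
The plan is to invoke Theorem \ref{multi} with the convex function $\phi(x) = \|x\|_X$, which is convex on the convex set $U = X$. Applied directly to the three aligned points $a, b, c$ (with $c \notin [a,b]$ and $a+b-c \in X$), the theorem yields
$$\|a\|_X + \|b\|_X \leq \|c\|_X + \|a+b-c\|_X,$$
which rearranges to $\|a+b-c\|_X \geq \|a\|_X + \|b\|_X - \|c\|_X$.

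To upgrade this to the absolute value, I would complement it with one application of the ordinary triangle inequality. Writing $c = a + b - (a+b-c)$ gives $\|c\|_X \leq \|a\|_X + \|b\|_X + \|a+b-c\|_X$, i.e.\ $\|a+b-c\|_X \geq \|c\|_X - \|a\|_X - \|b\|_X$. Combining both lower bounds yields the first claim of the corollary.

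For the \emph{in particular} statement, the plan is to apply the first inequality to a cleverly chosen aligned triple. Assuming $\lambda \in (0,1)$ and $a \neq b$, I would set $A = a$, $B = b + \lambda(a-b)$, $C = b$. Parametrising the line through $a$ and $b$ as $\ell(t) = (1-t)b + t a$, we have $A = \ell(1)$, $B = \ell(\lambda)$, $C = \ell(0)$, so the three points are aligned, $C \notin [A,B]$ since $0 \notin [\lambda,1]$, and $A + B - C = a + \lambda(a-b)$. The inequality from the first part then gives $\|a+\lambda(a-b)\|_X \geq \|a\|_X + \|b+\lambda(a-b)\|_X - \|b\|_X$, which collapses to the desired inequality thanks to the assumption $\|a\|_X = \|b\|_X$. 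The degenerate cases are handled separately: $\lambda = 0$ is just $\|a\|_X \geq \|b\|_X$, an equality; $\lambda = 1$ reduces to $\|2a-b\|_X \geq \|a\|_X$, which follows from the midpoint convexity estimate $\|a\|_X \leq \tfrac{1}{2}\|2a-b\|_X + \tfrac{1}{2}\|b\|_X$ combined with $\|a\|_X = \|b\|_X$; and $a=b$ is trivial.

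The only subtle step is selecting the correct aligned triple for the second assertion; beyond that, the proof is a direct combination of Theorem \ref{multi} with the ordinary triangle inequality, plus a quick check that the degenerate values of $\lambda$ do not cause trouble.
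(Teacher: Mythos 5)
Your proposal is correct and follows essentially the same route as the paper: Theorem \ref{multi} applied to the norm yields $\|a\|_X+\|b\|_X\le\|c\|_X+\|a+b-c\|_X$, and for the \emph{in particular} part you use exactly the paper's aligned triple $a$, $\lambda a+(1-\lambda)b$, $b$. The only deviations are minor: for the bound $\|a+b-c\|_X\ge\|c\|_X-\|a\|_X-\|b\|_X$ you use the ordinary triangle inequality directly, whereas the paper reapplies Theorem \ref{multi} to the degenerate triple $\tilde a=\tilde b=c/2$, $\tilde c=a+b$ (which amounts to the same triangle inequality), and you additionally verify the degenerate cases $\lambda\in\{0,1\}$ and $a=b$, which the paper passes over silently.
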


\begin{proof}
Since the norm in a normed space is a convex function, we can apply Theorem \ref{multi} to obtain $\|a\|_X+\|b\|_X \leq \|c\|_X+\|a+b-c\|_X$. \\
Reciprocally, the result is trivial for $a+b={c \over 2}$. Assume then $a+b \neq {c \over 2}$ and apply Theorem \ref{multi} with $\tilde{a}=\tilde{b}={c \over 2}$ and $\tilde{c}=a+b$ (since $a+b \notin \left[{c \over 2},{c \over 2}\right]$) to obtain
$$
\|c\|_X=\|{c \over 2}\|_X+\|{c \over 2}\|_X \leq \|a+b\|_X+\|{c \over 2}+{c \over 2}-(a+b)\|_X.
$$
If now $\|a\|_X=\|b\|_X$, notice that $a, b$ and $\lambda a+(1-\lambda)b$ are three aligned points and that $b \notin [a,\lambda a +(1-\lambda)b]$. Then,
$$
\|a+(\lambda a+(1-\lambda)b)-b\|_X+\|b\|_X \geq \|a\|_X+\|\lambda a+(1-\lambda)b\|_X, 
$$
from which the second part of the corollary follows.
\end{proof}


\begin{bibdiv}
\begin{biblist}

\bib{ArBerPeSe}{book}{
	author={Aron, R.M.},
	author={Bernal-Gonz\'alez, L.},
	author={Pellegrino, D.},
	author={Seoane-Sep\'ulveda, J.B.},
	title={Lineability: The search for linearity in Mathematics},
	series={Monographs and Research Notes in Mathematics},
	publisher={Chapman \& Hall/CRC},
	place={Boca Raton, FL},
	date={2016},
}

\bib{BerPeSe}{article}{
	author={Bernal-Gonz{\'a}lez, L.},
	author={Pellegrino, D.},
	author={Seoane-Sep{\'u}lveda, J.B.},
	title={Linear subsets of nonlinear sets in topological vector spaces},
	journal={Bull. Amer. Math. Soc. (N.S.)},
	volume={51},
	year={2014},
	number={1},
	pages={71--130},
	doi={10.1090/S0273-0979-2013-01421-6},
}

\bib{E}{article}{
author={Ezimadu, P.E.},
title={A characterization of uniformly convex functions}
language={English},
journal={J. Math. Sci. (Dattapukur)},
volume={25}, 
year={2014},
number={1}, 
pages={49–-57},
}

\bib{FuKimNa}{article}{
author={Fujii, M.},
author={Kim, Y.O.},
author={Nakamoto, R.},
title={A characterization of convex functions and its application to operator monotone functions}
journal={Banach J. Math. Anal.},
volume={8}, 
year={2014},
number={2}, 
pages={118--123},
}

\bib{HanMart}{article}{
author={Hantoute, A.},
author={Martínez-Legaz, J.E.},
title={Characterization of Lipschitz continuous difference of convex functions},
language={English summary},
journal={J. Optim. Theory Appl.},
volume={159}, 
year={2013},
number={3}, 
pages={673--680},
}

\bib{Jen1}{article}{
author={Jensen, J.L.W.V.},
title={Om konvexe Funktioner og Uligheder merrem Middelvaerdier},
journal={Nyt Tidsskr. Math.},
volume={16B}, 
pages={49--69},
}

\bib{Jen2}{article}{
author={Jensen, J.L.W.V.},
title={Sur les fonctions convexes et les inegalit\'es entre les valeurs moyennes},
journal={Acta Math.},
volume={30}, 
pages={175--193},
}

\bib{LiYeh}{article}{
author={Li, Y.C.},
author={Yeh, C.C.},
title={Some characterizations of convex functions},
language={English summary},
journal={Comput. Math. Appl.},
volume={59},
year={2010},
number={1}, 
pages={327--337},
}

\bib{Mi}{article}{
author={Micherda, B.},
title={A new characterization of convex $\phi$-functions with a parameter},
journal={Opuscula Math.},
volume={35},
year={2015},
number={2}, 
pages={191--197},
}

\bib{Ng}{article}{
author={Ng, C.T.},
title={On midconvex functions with midconcave bounds},
journal={Proc. Amer. Math. Soc.},
volume={102},
year={1988},
pages={538--540},
}

\bib{Ni}{article}{
author={Nijodem, K.},
title={Problems and remarks},
journal={Proceedings of the International Conference on Functional Equations and Inequalities},
date={May 27--June 2},
year={1984},
series={Sielpia, Poland},
}

\bib{RoVar}{book}{
author={Roberts, A.W.},
author={Vaberg, D.E.},
title={Convex functions},
series={Pure and applied mathematics, a series of monographs and textbooks},
volume={57},
publisher={Academic Press, New York},
date={1973},
pages={x+300},
isbn={0125897405},
}

\bib{TeoYang}{article}{
author={Teo, K.L.},
author={Yang, X.Q.},
title={A characterization of convex function},
journal={Applied Mathematics Letters},
number={13},
date={2000},
pages={27--30},
}

\bib{VoHiJe}{article}{
author={Volle, M.},
author={Hiriart-Urruty, J.B.},
title={A characterization of essentially strictly convex functions on reflexive Banach spaces },
language={English summary}
journal={Nonlinear Anal.},
number={75},
date={2012},
number={3},
pages={1617–-1622},
}

\end{biblist}
\end{bibdiv}

\end{document}